\pgfplotsset{compat=1.15}
\newcommand{\footremember}[2]{%
    \footnote{#2}
    \newcounter{#1}
    \setcounter{#1}{\value{footnote}}%
}
\newcommand{\footrecall}[1]{%
    \footnotemark[\value{#1}]%
} 
\def\df{d_f}
\def\R{{\mathbb R}}
\def\N{{\mathbb N}}
\DeclareMathOperator{\dist}{dist}
\newtheorem{theorem}{\bf Theorem}
\newtheorem{lemma}{\bf Lemma}
\newtheorem{corollary}{\bf Corollary}
\newtheorem{definition}{\bf Definition}
\newtheorem{remark}{\bf Remark}
\providecommand{\keywords}[1]
{
  \small	
  \textbf{\textbf{Keywords:}} #1
}
\begin{document}
\definecolor{qqzzff}{rgb}{0,0.6,1}
\definecolor{ududff}{rgb}{0.30196078431372547,0.30196078431372547,1}
\definecolor{xdxdff}{rgb}{0.49019607843137253,0.49019607843137253,1}
\definecolor{ffzzqq}{rgb}{1,0.6,0}
\definecolor{qqzzqq}{rgb}{0,0.6,0}
\definecolor{ffqqqq}{rgb}{1,0,0}
\definecolor{uuuuuu}{rgb}{0.26666666666666666,0.26666666666666666,0.26666666666666666}
\newcommand{\vi}[1]{\textcolor{blue}{#1}}
\newif\ifcomment
\commentfalse
\commenttrue
\newcommand{\comment}[3]{%
\ifcomment%
	{\color{#1}\bfseries\sffamily#3%
	}%
	\marginpar{\textcolor{#1}{\hspace{3em}\bfseries\sffamily #2}}%
	\else%
	\fi%
}
\newcommand{\victor}[1]{
	\comment{blue}{V}{#1}
}

\newcommand{\hoang}[1]{
	\comment{red}{H}{#1}
}
\if{
\newcommand{\mapr}[1]{
	\comment{blue}{}{#1}
}
}\fi
\newcommand{\mapr}[1]{{{\color{blue}#1}}}
\newcommand{\revise}[1]{{{\color{blue}#1}}}

\title{
On the complexity of Putinar-Vasilescu's Positivstellensatz
}
\author{%
Ngoc Hoang Anh Mai\footremember{1}{CNRS; LAAS; 7 avenue du Colonel Roche, F-31400 Toulouse; France.}, %
  Victor Magron\footrecall{1} \footremember{2}{Universit\'e de Toulouse; LAAS; F-31400 Toulouse, France.} %
  }
\maketitle
\begin{abstract}
We provide a new degree bound on the weighted sum-of-squares (SOS) polynomials for Putinar-Vasilescu’s Positivstellensatz.
This leads to another Positivstellensatz saying that
if $f$ is a polynomial of degree at most $2\df$  nonnegative on a semialgebraic set having nonempty interior defined by finitely many polynomial inequalities $g_j(x)\ge 0$, $j=1,\dots,m$ with $g_1:=L-\|x\|_2^2$ for some $L>0$, then there exist positive constants $\bar c$ and $c$ depending on $f,g_j$ such that for any $\varepsilon>0$, for all $k\ge \bar c\varepsilon^{-c}$, $f$ has the decomposition
\begin{equation}
\begin{array}{l}
(1+\|x\|_2^2)^k(f+\varepsilon)=\sigma_0+\sum_{j=1}^m \sigma_jg_j \,,
\end{array}
\end{equation}
for some SOS polynomials $\sigma_j$ being such that the degrees of $\sigma_0,\sigma_jg_j$ are at most $2(\df+k)$. 
Here $\|\cdot\|_2$ denotes the $\ell_2$ vector norm.
As a consequence, we obtain  a converging hierarchy of semidefinite relaxations for lower bounds in polynomial optimization on basic compact semialgebraic sets.
The complexity of this hierarchy is $\mathcal{O}(\varepsilon^{-c})$ for prescribed accuracy $\varepsilon>0$.
In particular, if $m=L=1$ then $c=65$, yielding the  complexity $\mathcal{O}(\varepsilon^{-65})$ for the minimization of a polynomial on the unit ball.
Our result improves the complexity bound  $\mathcal{O}(\exp(\varepsilon^{-c}))$ due to Nie and Schweighofer in [Journal of Complexity 23.1 (2007): 135-150].
\end{abstract}
\keywords{positivity certificate; Putinar-Vasilescu's Positivstellensatz; basic semialgebraic set; sum-of-squares; polynomial optimization;  moment-SOS hierarchy}
\tableofcontents
\section{Introduction}
\if{
{\color{red} Hoang \& Victor: 

The paper is nice but in my opinion needs a serious rewriting :
\begin{itemize}
    \item Put Background first before "contribution" with the two main
    convergence results before the present paper. 
    
    - I Schweighoffer's result on Schm\"udgen's representation (usnibg preordering with $2^m$ terms). 
    
    -II Nie \& Schweighofer's results on Putinar (using quadratic module with only $m$ terms) with exponential rate.
    
    - III Say that for the special case of the sphere, nicer results have been obtained by Fawzi (and earlier by Werher) and also LMaurebnt and collaborators for the hierarchy of upper bounds on specific sets.
    
    - IV  So the present result is concerned with the general case of compact basis semi-algebraic sets. Basically  one obtains a rate similar in spirit and magnitude to I, but still based on the quadratic module (and not the preordering) but now with a prescribed denominator (using Putinar's Vasilescu's Positivstellensatz.
    
    \item Then describe the main contribution in the "Contribution section"
    \item say a  few words on the basic ingredients of the technique used.
    \item By the way, Theorem 2 has been already provided by 
    Aquistapace et al. in a very general setting\footnote{F. Acquistapace, C. Andradas, F. Broglia. The Positivstellensatz for definable functions on O-minimal structures, \emph{Illinois J. Math. 46}, No 3, pp. 685--693, 2002.} and also for analytic functions in another paper. So I don't know if it is appropriate to state Theorem 2 if it is contained in the reference that I mentiojn in footnote
    \end{itemize}}
}\fi    
For a positive $m \in \N$, let us consider the polynomial optimization problem (POP):
\begin{equation}\label{eq:POP-def}
f^\star:=\inf_{x \in S} f( x)\,,
\end{equation}
where $f\in\R[x]$ and
\begin{equation}\label{eq:semial.set.def}
    S:=\{x\in\R^n\,:\,g_j(x)\ge 0\,,\,j\in[m]\}\,,
\end{equation}
for some $g_j\in\R[x]$, $j\in[m]$.
Here $\R[x]$ denotes the ring of real polynomials in vector of variables $x=(x_1,\dots,x_n)$ and $[m]$ stands for the set $\{1,\dots,m\}$.
Assume that $f$ has degree at most $2\df$ for some positive  $\df\in\N$.
The set $S$ is a conjunction of finitely many polynomial inequalities, and therefore is called a \emph{basic semialgebraic set}.

Problem \eqref{eq:POP-def} can be written as 
\begin{equation}\label{eq:equi.prob}
\begin{array}{l}
f^\star=\sup_{\lambda\in\R}\{\lambda\,:\,f-\lambda > 0\text{ on }S\}\,.
\end{array}
\end{equation}
We can replace the inequality constraint  of problem \eqref{eq:equi.prob} by an equality constraint, if one can represent positive polynomials on $S$.
%
Assume that $S$ has nonempty interior and a ball constraint is present, i.e., $g_1=L-\|x\|_2^2$ for some $L>0$. 
Our first (minor) contribution is to rely on the representation of polynomials positive on $S$ stated by Putinar and Vasilescu \cite{putinar1999positive}, to obtain
\begin{equation}\label{eq:rep.Pu-Va}
    f-\lambda=\frac{\sigma_0+\sum_{j\in[m]}\sigma_jg_j}{(1+\|x\|_2^2)^k}\,,
\end{equation}
for some $k\in\N$, $\sigma_j\in \Sigma[x]$, $j\in[m]$, being such that $\deg(\sigma_0)\le 2(k+\df)$ and $\deg(\sigma_jg_j)\le 2(k+\df)$.
Here $\Sigma[x]$ denotes the set of sum-of-squares (SOS) polynomials and $\deg(\cdot)$ stands for the degree of a polynomial.
Such a representation of positive polynomials is called a  \emph{Positivstellensatz}.

After bounding the degrees of the SOS polynomials involved in  \eqref{eq:rep.Pu-Va}, we obtain the following hierarchy of relaxations indexed by $k\in\N$:
\begin{equation}\label{eq:hier.puVa}
\begin{array}{rl}
     \rho^{(0)}_k:=\sup\limits_{\lambda,\sigma_j}& \lambda \\
     \text{s.t.}& \lambda\in\R\,,\,\sigma_j\in \Sigma[x]\,,\\
     &(1+\|x\|_2^2)^k(f-\lambda)=\sigma_0+\sum_{j\in[m]}\sigma_jg_j \,,\\
     &\deg(\sigma_0)\le 2(k+\df)\,,\,\deg(\sigma_jg_j)\le 2(k+\df)\,.
\end{array}
\end{equation}
Problem \eqref{eq:hier.puVa} can be solved numerically using semidefinite programming \cite{ben2001lectures}.
It is due to the fact that every SOS polynomial $\sigma$ of degree $2t$ has the form $\sigma=v_t^\top Gv_t$ for some positive semidefinite matrix $G$ (which is denoted by $G\succeq 0$, i.e., $G$ is symmetric and each eigenvalue of $G$ is nonnegative), with $v_t$ being the vector of all monomials $x^\alpha:=x_1^{\alpha_1}\dots x_n^{\alpha_n}$ of degree at most $t$.
Such a matrix $G$ is called a \emph{Gram matrix} associated to $\sigma$.
It is easy to see that for each $k \in \N$, $\rho_k^{(0)}$ is a lower bound of $f^\star$, that the sequence $(\rho_k^{(0)})_{k\in\N}$ is monotone nondecreasing, and converges to $f^\star$.

In the present paper, we answer the following two interesting  questions:
\begin{enumerate}
    \item How fast does $(\rho_k^{(0)})_{k\in\N}$ converge to $f^\star$? We show the convergence rate  $\mathcal{O}(k^{-1/c})$ for some constant $c>0$ depending on $f$ and $g_j$.
    \item Is there any explicit example to illustrate this rate of convergence? If $S$ is the unit ball, i.e., $m=1$ and $g_1=1-\|x\|_2^2$, the sequence $(\rho_k^{(0)})_{k\in\N}$ converges to $f^\star$ with the rate $\mathcal{O}(k^{-1/65})$.
\end{enumerate}
    
\subsection{Background}
Positivity certificates have been studied throughout  history  of  the  development  of  real  algebraic geometry.
Nowadays it become a powerful tool for polynomial optimization thanks to the so-called Moment-SOS hierarchy (also known as ``Lasserre’s hierarchy") \cite{lasserre2001global}.
The convergence rate of the Moment-SOS hierarchy to the optimal value of a POP inherently depends on the complexity of the representation of positive polynomials.
Roughly speaking, obtaining a lower degree bound on the SOS polynomials involved in the positivity certificate allows one to improve the convergence rate of the corresponding Moment-SOS hierarchy.
How to find such lower degree bound is an interesting question and goes hand in hand with the quest of improving the convergence analysis of the Moment-SOS hierarchy. 
Let us review some of the standard
results on degree bounds of positivity certificates and the corresponding convergence rates of the Moment-SOS hierarchy.

Reznick provides in \cite{reznick1995uniform} a Positivstellensatz for positive definite forms with an explicit degree bound.
Namely, if $p$ is a positive definite form, i.e., $p$ is homogeneous and  positive except at the origin, then for all $k \in \N$ satisfying
\begin{equation}\label{eq:bound.reznick}
    k\ge \frac{{2nd(2d - 1)}}{{4\log 2}}\Theta (p) - \frac{{n + 2d}}{2}\,,
\end{equation}
then ${\| x \|^{2k}_2}p$ is a homogeneous SOS polynomial of degree $2(k + d)$, where $2d=\deg(p)$.
Here for each $h\in\R[x]$, 
\begin{equation}
    \Theta(h):=\frac{\sup_{x \in  \mathbb S^{n-1}}  h(x) }{\inf_{x \in  \mathbb S^{n-1}}  h(x) }\,.
\end{equation}
This yields a linear convergence rate of  $\mathcal{O}(\varepsilon^{-1})$ for the minimization of a polynomial (see  \cite[Theorem 6]{mai2021positivity}).

Powers and Reznick \cite{powers2001new} improve the existing degree bound available for P\'olya's Positivstellensatz \cite{polya1928positive}.
Explicitly, if $p$ is a homogeneous polynomial of degree $d$ positive on the simplex 
\begin{equation}\label{eq:simplex.def}
\begin{array}{l}
    \Delta_n=\{x\in\R^n\,:\,x_j\ge 0\,,\,j\in[n]\,,\,\sum_{j\in[n]}x_j=1\}\,,
\end{array}
\end{equation}
then for all $k\in\N$ satisfying 
\begin{equation}\label{eq:bound.polya}
    k\ge \frac{d(d-1)\|p\|}{2\min_{x\in\Delta_n}p(x)} -d\,,
\end{equation}
$(\sum_{j\in[n]}x_j)^kp$ has positive coefficients.
Here for each $h=\sum_{\alpha}h_\alpha x^\alpha\in\R[x]$, we note $\| h \|: = \max_\alpha   \frac{| {{h_\alpha }} |}{c_\alpha}$ with $c_\alpha := \frac{|\alpha|!}{\alpha_1!\dots\alpha_n!}$ for each $\alpha \in \N^n$. 
This yields a linear convergence rate of  $\mathcal{O}(\varepsilon^{-1})$ for the minimization of a homogeneous polynomial on the simplex.

Applying the result of Powers and Reznick,  Schweighofer \cite{schweighofer2004complexity} obtains a degree bound  for Schm\"udgen's Positivstellensatz \cite{schmudgen1991thek} claiming that given a semialgebraic $S \subset (-1,1)^n$ defined as in \eqref{eq:semial.set.def} and a  polynomial $f$ positive on $S$, then there exists a real $c>0$ depending on $S$ such that for all $k\in\N$ satisfying
\begin{equation}\label{eq:bound.schmudgen}
    k\ge c\df^2\left(1+\left(\df^2n^{\df}\frac{\|f\|}{f^\star}\right)^c\right)\,,
\end{equation}
one has $f\in \mathcal{P}_k$, where $\mathcal{P}_k$ is the truncated preordering of order $k\in\N$ associated with $S$:
\begin{equation}
\label{eq-preordering}
\begin{array}{l}
\mathcal P_k := \{ \sum_{\alpha  \in {{\{ 0,1\} }^m}} {{\sigma _\alpha }g_1^{{\alpha _1}} \ldots g_m^{{\alpha _m}}}  \, :\,  {\sigma _\alpha } \in \Sigma [ x ]\,,\,\deg({\sigma _\alpha }g_1^{{\alpha _1}} \ldots g_m^{{\alpha _m}})\le k \}
\,.
\end{array}
\end{equation}
Consequently, the corresponding SOS hierarchy of lower bounds $ (\rho_k^{\text{pre}})_{k\in\N}$, with
\begin{equation}
\begin{array}{l}
 \rho_k^{\text{pre}}:=\sup_{\lambda\in\R}\{\lambda\,:\,f-\lambda \in \mathcal{P}_k\}\,,\quad k\in\N\,,
\end{array}
\end{equation}
converges to $f^\star$ with the rate $\mathcal{O}(\varepsilon^{-c})$.
Nevertheless, the representation of $f-\lambda$ in $\mathcal{P}_k$ involves $2^m$ SOS polynomials.

Relying on the degree bound for Schm\"udgen’s Positivstellensatz, Nie and Schweighofer analyze in \cite{nie2007complexity} the complexity of Putinar's Positivstellensatz.
Namely, if $S\subset (-1,1)^n$, $g_1=L-\|x\|_2^2$ for some $L>0$ and $f$ is positive on $S$, then there exists a real $c>0$ depending on $S$ such that for all $k\in\N$ satisfying
\begin{equation}
    k\ge c\exp\left(\left(\df^2n^{\df}\frac{\|f\|}{f^\star}\right)^c\right)\,,
\end{equation}
one has $f\in \mathcal{Q}_k$, where $\mathcal{Q}_k$ stands for the truncated quadratic module of order $k\in\N$ associated with  $S$:
\begin{equation}
\label{eq-quadratic}
\begin{array}{l}
\mathcal Q_k := \{ \sigma_0+\sum_{j\in[m]}\sigma_jg_j  \, :\,  {\sigma _j} \in \Sigma [ x ]\,,\,\deg(\sigma _0)\le k\,,\,\deg(\sigma _jg_j)\le k \}
\,.
\end{array}
\end{equation}
Accordingly, the corresponding SOS hierarchy of lower bounds $(\rho_k^{\text{mod}})_{k\in\N}$, with
\begin{equation}
\begin{array}{l}
\rho_k^{\text{mod}}:=\sup_{\lambda\in\R}\{\lambda\,:\,f-\lambda \in \mathcal{Q}_k\}\,,\quad k\in\N\,,
\end{array}
\end{equation}
converges to $f^\star$ with the rate  $\mathcal{O}(\exp(\varepsilon^{-c}))$.
Despite of this exponential convergence rate, the representation of $f-\lambda$ in $\mathcal{Q}_k$ involves only $m+1$ SOS polynomials which is in deep contrast with  the exponential number of SOS polynomials involved in the representation in $\mathcal{P}_k$.

However, the convergence rate of Schweighofer is still comparable to the later one of Nie-Schweighofer when the semialgebraic set $S$ is defined by $m_1\le 1$ inequality constraints $g_1(x)\ge 0$ and $m_2$ equality constraints $h_i(x)=0$. 
In this case, it turn out that
\begin{equation}
\begin{array}{rl}
      \rho_k^{\text{pre}}=\sup\limits_{\lambda,\sigma_j,\eta_i}& \lambda \\
     \text{s.t.}& \lambda\in\R\,,\,\sigma_j\in \Sigma[x]\,,\,\eta_i\in\R[x]\\
     &f-\lambda =\sigma_0+\sum_{j\in[m_1]}\sigma_jg_j +\sum_{i\in[m_2]}\eta_ih_i\,,\\
     &\deg(\sigma_0)\le k\,,\,\deg(\sigma_jg_j)\le k\,,\,\deg(\eta_ih_i)\le k\,,
\end{array}
\end{equation}
and the sequence $(\rho_k^{\text{pre}})_{k\in\N}$ converges to $f^\star$ with the rate $\mathcal{O}(\varepsilon^{-c})$.
Here $[0]=\emptyset$.
In particular, $c=\frac{1}{2}$ when $f$ is homogeneous, $\df\le n$, $m_1=0$ and $m_2=1$ with $h_1=1-\|x\|_2^2$ as shown by Fang and Fawzi \cite{fang2020sum}. 
Thus they obtain the quadratic convergence rate $\mathcal{O}(k^{-2})$ for the minimization of a homogeneous polynomial on the unit sphere.
This improves upon the earlier linear convergence rate $\mathcal{O}(k^{-1})$ by Doherty and Wehner in \cite{doherty2012convergence}.

Recently Slot and Laurent \cite{slot2020improved} have provided several convergence rates for Lasserre’s
measure-based \emph{upper bounds} for polynomial optimization, on specific convex domains and reference measures. 
This is in contrast with the present work, where we provide a new convergence rate for the Moment-SOS hierarchy of lower bounds.

Our contribution is concerned with the case of basic semialgebraic sets having nonempty interiors.  Basically one obtains a convergence rate similar in spirit and magnitude of Schweighofer's bound $\bar c\varepsilon^{-c}$, but still based on the quadratic module $\mathcal{Q}_k$ (not the preordering $\mathcal{P}_k$) thanks to the prescribed denominator $(1+\|x\|_2^2)^k$ involved in Putinar-Vasilescu’s Positivstellensatz.
\if{
\hoang{

As another evidence,  the interrupted relaxations with the help of the prescribed denominator can exploit the new sparsity in dense POPs on the nonnegative orthant with any largest matrix size and therefore provide better lower bounds within less computing time than the standard one.

They might expect the exact value due to the non-prescribed but how does one avoid the perturbation in recent SDP solver? A negative answers is obtained with the very accurate SDP solvers, e.g., interior-point method because of the pertubed log barrier.

The rounding and bisection  can be a solution but accordingly we need to solve the SDP many times. In fact, these method doesn't have theoretical guarantee for the exactness when the exact results are floating numbers.}
}\fi
\subsection{Contribution}
The construction of the hierarchy of semidefinite relaxations \eqref{eq:hier.puVa} is based on the Positivstellensatz stated in Corollary \ref{coro:compact}. 
More explicitly, 
if $S$ has nonempty interior such that $g_1=L-\|x\|_2^2$ for some $L>0$ and $f$ is of degree at most $2\df$ such that $f$ is nonnegative on $S$, then there exist positive constants $\bar c$ and $c$ depending on $f,g_j$ such that for all $\varepsilon>0$, for all $k\ge \bar c\varepsilon^{-c}$, 
    \begin{equation}\label{eq:Pu-va.ref}
    \begin{array}{l}
(1+ \|x\|_2^2)^{k}(f+\varepsilon)=\sigma_0+\sum_{j\in[m]}\sigma_jg_j\,,
    \end{array}
    \end{equation}
    for some $\sigma_j\in\Sigma[x]$ being such that $\deg(\sigma_0)\le 2(k+\df)$ and $\deg(\sigma_jg_j)\le 2(k+\df)$.

 In order to prove \eqref{eq:Pu-va.ref}, we provide a degree bound on the weighted SOS polynomials for the homogenized Putinar-Vasilescu's Positivstellensatz \cite{putinar1999positive}.
    This is stated in Theorem \ref{theo:complex.putinar.vasilescu} as follows:
    If $f,g_1,\dots,g_m$ are homogeneous polynomials of even degrees such that $S$ has nonempty interior and $f$ is nonnegative on $S$, then there exist positive constants $\bar c$ and $c$ depending on $f,g_j$ such that for all $\varepsilon>0$, for all $k\ge \bar c\varepsilon^{-c}$, 
    \begin{equation}\label{eq:PuVa.perturb}
    \begin{array}{l}
         \|x\|_2^{2k}(f+\varepsilon\|x\|_2^{2\df})=\sigma_0+\sum_{j\in[m]}\sigma_jg_j\,,
    \end{array}
    \end{equation}
    for some homogeneous SOS polynomials $\sigma_j$ being such that $\deg(\sigma_0)=\deg(\sigma_jg_j)=2(k+\df)$.
    Here a polynomial $p$ is homogeneous of degree $2 t$ if
$p(\lambda x)=\lambda^{2 t} p(x)$ for all $x\in\R^n$ and each $\lambda\in\R$.
Remark that the original version of Putinar-Vasilescu's Positivstellensatz in \cite{putinar1999positive} does not include any degree bound on the weighted SOS polynomials $\sigma_j$ involved in the representation \eqref{eq:PuVa.perturb}.
Our proof of Theorem \ref{theo:complex.putinar.vasilescu} consists of three main steps:
\begin{enumerate}
    \item Construct iteratively some positive ``weight''  functions $\psi_j$ such that $f+\varepsilon -\sum_{j\in[m]} \psi_jg_j$ is positive on $[-1,1]^n$.
    The idea of this step is similar in spirit to the proof of the inductive property in \cite[Proposition 3.1]{schulze2014schm} and relies on the Lojasiewicz inequality.
    \item Approximate $\sqrt{\psi_j}$ with the multivariate Bernstein polynomial $q_j$ on $[-1,1]^n$ such that the  polynomial $H=f+\varepsilon -\sum_{j\in[m]} q_j^2g_j$ is positive on the unit sphere $\mathbb{S}^{n-1}$.
    \item Apply Reznick's  Positivstellensatz \cite{reznick1995uniform} to the homogenization of $H$.
\end{enumerate}
The complexity analysis of every step is derived to get the final degree bound $\bar c\varepsilon^{-c}$.

Afterwards, we obtain in Corollary \ref{eq:dehomo} the same degree bound for the dehomogenized Putinar-Vasilescu's Positivstellensatz. 
This improves the bound $\mathcal{O}(\exp(\varepsilon^{-c}))$  obtained in our previous work \cite{mai2021positivity}, based on Nie-Schweighofer's complexity result \cite{nie2007complexity} for Putinar's Positivstellensatz \cite{putinar1993positive}.
Corollary \ref{eq:dehomo} yields the convergence rate $\mathcal{O}(\varepsilon^{-c})$ for the corresponding hierarchy of relaxations for polynomial optimization on general (not necessarily compact) basic semialgebraic sets.

\subsection{Technical insights}
We start to recall the two main steps in the proof of Nie and Schweighofer \cite{nie2007complexity} for the degree bound of SOS polynomials involved in Putinar's Positivstellensatz:
\begin{enumerate}
    \item Find a large enough $k\in\N$  such that the polynomial
    \begin{equation}\label{eq:update.poly}
    \begin{array}{l}
         F=f +\varepsilon - \lambda\sum_{j\in[m]} (g_j-1)^{2k} g_j
    \end{array}
    \end{equation}
     is positive on  $[-1,1]^n$.
    An estimate $k\ge O(\varepsilon^{-c})$ is obtained.
    Here $\varepsilon>0$ measures how close the polynomial $f$ (assumed to be nonnegative on $S$) is to have a zero on $S$.
    \item Apply Schm\"udgen's Positivstellensatz to $F$ on $[-1,1]^n$.
\end{enumerate}
Notice that Schweighofer's degree bound of Schm\"udgen's Positivstellensatz is exponential in the degree of the given positive polynomial ($n^{\df}$ in \eqref{eq:bound.schmudgen}).
Accordingly, Nie and Schweighofer obtain an exponential bound  $n^{\mathcal{O}(\varepsilon^{-c})}$ in the second step since $\deg(F)\sim Ck$ as $k\to\infty$ for some positive constant $C$.

One notable difference in our proof is that the weight $\lambda(g_j-1)^k$ in \eqref{eq:update.poly} is replaced by a non-differentiable positive function $\psi_j$. 
Surprisingly, we can prove that the square root $\sqrt{\psi_j}$ is a Lipschitz continuous function. 
Thus each $\sqrt{\psi_j}$ can be approximated with a Bernstein polynomials $q_i$ on $[-1,1]$. 
Here, the advantage of using Bernstein polynomials is that the  approximation error between $\sqrt{\psi_j}$ and $q_i$ decreases with a rate which only depends on a Lipschitz constant of $\sqrt{\psi_j}$, and $|q_i|$ is upper bounded by the supremum of $\sqrt{\psi_j}$ on $[-1,1]^n$.

Next, we apply Reznick's Positivstellensatz to the homogeneous polynomial $\tilde H$ obtained from the homogenization of
\begin{equation}
    \begin{array}{l}
         H:=f+\varepsilon -\sum_{j\in[m]}q_j^2g_j \,,
    \end{array}
\end{equation}
being such that the bounds of $\tilde H$  and $H$ on the unit sphere are the same.
The important point to note here is that the degree bound of Reznick's Positivstellensatz is quadratic in the degree of $\tilde H$ and linear in the ratio $\Theta(\tilde H)$ (see \eqref{eq:bound.reznick}).
This is in deep contrast with  Schm\"udgen's Positivstellensatz, as there is no exponential dependency in these two quantities. 
This leads to the difference between  our convergence rate $\mathcal{O}(\varepsilon^{-c})$ and Nie-Schweighofer's rate  $\mathcal{O}(\exp(\varepsilon^{-c}))$.

One may ask whether with the same techniques from our proof, one could improve the existing degree bound for Putinar's  Positivstellensatz.
We have tried to apply the degree bound \eqref{eq:bound.polya} of P\'olya's Positivstellensatz to $H$ after a change of coordinate, but unfortunately this leads to the same bound as Nie and Schweighofer.
The underlying reason is that the norm $\|p\|$ in \eqref{eq:bound.polya} depends on the coefficients of $p$.
In our situation, $p$ coincides with $H$ and the coefficients of $H$ are bounded by a value involving the coefficients of  the Bernstein polynomials.
The bound on the largest coefficient, even for a univariate Bernstein polynomial, seems to be exponential in the approximation order $t$, namely,
$\binom{2t}{t}\sim {\frac {4^{t}}{\sqrt {\pi t}}}$ as $t\to\infty$.
The same issue occurs when we apply the degree bound of Schm\"udgen's Positivstellensatz instead of the one of P\'olya's  Positivstellensatz.

\section{Notation and definitions}
In this section, we introduce mandatory notation and definitions.
With $x := (x_1,\dots,x_n)$, let $\R[x]$ stands for the ring of real polynomials and let 
$\Sigma[x]\subset\R[x]$ be the subset of sum-of-squares (SOS) of  polynomials.
Let us note $\R[x]_t$ and $\Sigma[x]_t$ the respective restrictions of these two sets to polynomials of degree at most $t$ and $2t$. 

Given $\alpha = (\alpha_1,\dots,\alpha_n) \in \N^n$, we note $|\alpha| := \alpha_1 + \dots + \alpha_n$.
Denote $\N^n_t=\{\alpha\in\N^n\,:\,|\alpha|\le t\}$ for each $t\in\N$.
Let $(x^\alpha)_{\alpha\in\N^n}$ 
be the canonical basis of monomials for  $\R[x]$ (ordered according to the graded lexicographic order) and 
$v_t(x)$ be the vector of all monomials up to degree $t$, with length ${\binom {n+t} n}$.
A polynomial $f\in\R[x]_t$ is written as  
$f(x)\,=\,\sum_{| \alpha | \leq t} f_\alpha\,x^\alpha\,=\,\bar{f}^\top v_t(x)$, 
where $\bar{f}:=(f_\alpha)_{\alpha}\in\R^{\binom {n+t} n}$ is its vector of coefficients in the canonical basis.
The
\textit{degree-$t$ homogenization} $\tilde f$ associated to $f\in\R[x]_t$ is a  homogeneous polynomial of degree $t$ in $n+1$ variables, defined by 
$\tilde f ( x,x_{n + 1}) := x_{n + 1}^{t} f ( x/x_{n + 1} )$.
A \textit{positive definite form} is a nonnegative homogeneous polynomial which is positive everywhere except at the origin. 

For a given real-valued sequence $y=(y_\alpha)_{\alpha\in\N^n}$, let us define the \emph{Riesz linear functional} $L_y:\R[ x ] \to \R$ by $f\mapsto {L_y}( f ) := \sum_{\alpha} f_\alpha y_\alpha$, for all $ f\in\R[x]$.
We say that a real infinite (resp. finite) sequence $( y_\alpha)_{\alpha  \in \N^n}$ (resp. $( y_\alpha)_{\alpha  \in \N^n_t}$) has a \emph{representing measure} if there exists a finite Borel measure $\mu$ such that $y_\alpha  = \int_{\R^n} {x^\alpha d\mu(x)}$ is satisfied for every $\alpha  \in {\N^n}$ (resp. $\alpha  \in {\N^n_t}$). In this case, $( y_\alpha)_{\alpha  \in \N^n}$ is called the \emph{moment sequence} of $\mu$. 
Next, given $y=(y_\alpha)_{\alpha  \in \N^n}$ and $d\in \N^*$,
the \emph{moment matrix} $M_d(y)$ of degree $d$ associated to $y$  is the real symmetric matrix of size $\binom{n+d}{d}$ defined by $M_d(y) := ( y_{\alpha  + \beta })_{\alpha,\beta\in \N^n_d} $. 
Let  $g = \sum_{\gamma} g_\gamma x^\gamma  \in \R[x]$. 
The localizing matrix $M_d(gy)$ of degree $d$ associated with $y$ and $g$ is the real symmetric matrix of the size $\binom{n+d}{d}$ given by $M_d(gy) := (\sum_\gamma  {{g_\gamma }{y_{\gamma  + \alpha  + \beta }}})_{\alpha, \beta\in \N^n_d}$.
\section{Representation theorems and degree bounds}

In this section, we derive representations of polynomials nonnegative on semialgebraic sets together with degree bounds.
We extend these representations to the set of continuous functions being nonnegative on compact domains.
\subsection{Polynomials nonnegative on general semialgebraic sets}
We analyze the complexity of Putinar-Vasilescu's Positivstellensatz \cite{putinar1999positive} in the following theorem:
\begin{theorem}\label{theo:complex.putinar.vasilescu}
(Homogenized representation)
Let $g_1,\dots,g_m$ be homogeneous polynomials of even degrees such that the semialgebraic set
\begin{equation}
    S:=\{x\in\R^n\,:\,g_1(x)\ge 0\dots,g_m(x)\ge 0\}
\end{equation} 
has nonempty interior.
Let $f$ be a homogeneous polynomial of degree $2\df$ for some $\df\in\N$ such that $f$ is nonnegative on $S$.
Then there exist positive constants $\bar c$ and $c$ depending on $f,g_1,\dots,g_m$ such that for all $\varepsilon>0$, for all $k\in\N$ satisfying
\begin{equation}
 k\ge \bar c\varepsilon^{-c}\,,
\end{equation}
there exist homogeneous SOS polynomials $\sigma_0,\dots,\sigma_m$ such that 
\begin{equation}\label{eq:degree.SOS}
    \deg(\sigma_0)=\deg(\sigma_1g_1)=\dots=\deg(\sigma_mg_m)=2(k+\df)
\end{equation}
and 
\begin{equation}\label{eq:represent}
    \|x\|_2^{2k}(f+\varepsilon\|x\|_2^{2\df})=\sigma_0+\sigma_1g_1+\dots+\sigma_mg_m\,.
\end{equation}
Moreover, if $m=1$ and $g_1=x_n^2-\|x'\|_2^2$ with  $x':=(x_1,\dots,x_{n-1})$, then $c=65$.
\end{theorem}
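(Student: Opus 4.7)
The plan is to follow the three-step strategy announced in the introduction: first construct nonnegative weights $\psi_j$ on $[-1,1]^n$ such that the residue $f + \varepsilon - \sum_j \psi_j g_j$ is strictly positive there; next, replace $\psi_j$ by the square $q_j^2$ of a Bernstein polynomial approximant of $\sqrt{\psi_j}$ so that the resulting \emph{polynomial} $H := f + \varepsilon - \sum_j q_j^2 g_j$ remains positive on $\mathbb{S}^{n-1}$; and finally apply Reznick's Positivstellensatz to a suitable homogenization of $H$. The crucial feature of this route is that Reznick's degree bound \eqref{eq:bound.reznick} is polynomial (quadratic in the degree, linear in the extremal ratio $\Theta$), so that the composition of three polynomial estimates stays polynomial, in contrast with the exponential blow-up inherent to the Nie--Schweighofer use of Schm\"udgen.

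For step 1 I would proceed by reverse induction on $j \in \{m, m-1, \dots, 1\}$, in the spirit of \cite[Proposition 3.1]{schulze2014schm}. Starting from $R_m := f + \varepsilon$, which is $\ge \varepsilon$ on $S$, at each stage I form $R_{j-1} := R_j - \psi_j g_j$ with a nonnegative $\psi_j$ designed to compensate the possible negativity of $R_j$ on $\{x \in [-1,1]^n : g_j(x) < 0\}$. A Lojasiewicz inequality on the compact set $[-1,1]^n$ bounds $|R_j|$ from above by a positive power of $|g_j|$ near the variety $\{g_j = 0\}$; this lets me take $\psi_j$ as a suitable fractional power of $|g_j|$, chosen large enough to restore positivity but still vanishing to order at least two on $\{g_j = 0\}$ so that $\sqrt{\psi_j}$ is Lipschitz on $[-1,1]^n$. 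The outcome is a family of weights whose sup-norm and Lipschitz constants grow only polynomially in $\varepsilon^{-1}$, together with a final residue $R_0 \ge \varepsilon/2$ on $[-1,1]^n$.

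For step 2 I would invoke the classical uniform approximation theorem for the multivariate Bernstein operator: if $g$ is Lipschitz on $[-1,1]^n$ and $q$ is its Bernstein polynomial of degree $t$, then $\|g - q\|_\infty = \mathcal{O}(t^{-1/2})$ with constant proportional to the Lipschitz constant of $g$, and $\|q\|_\infty \le \|g\|_\infty$. Applying this to $g = \sqrt{\psi_j}$ and writing $q_j^2 - \psi_j = (q_j - \sqrt{\psi_j})(q_j + \sqrt{\psi_j})$ controls the perturbation by $\mathcal{O}(t^{-1/2})$ times a polynomial in $\varepsilon^{-1}$. Choosing $t = \mathcal{O}(\varepsilon^{-c_2})$ then guarantees $H \ge \varepsilon/4$ on $\mathbb{S}^{n-1}$; the polynomial $H$ has degree and sup-norm that are polynomially bounded in $\varepsilon^{-1}$.

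Finally, in step 3 I would pass to a form $\tilde H$ obtained by homogenizing $H$, so that $\tilde H$ is positive on the relevant unit sphere and its ratio $\Theta(\tilde H)$ is polynomial in $\varepsilon^{-1}$. Inserting $\deg(\tilde H)$ and $\Theta(\tilde H)$ into \eqref{eq:bound.reznick} delivers an integer $k = \mathcal{O}(\varepsilon^{-c})$ for which $\|x\|_2^{2k} \tilde H$ is a sum of squares; moving the terms $q_j^2 g_j$ back to the right-hand side after homogenizing them by appropriate powers of $\|x\|_2$, and replacing the constant $\varepsilon$ by $\varepsilon \|x\|_2^{2\df}$, produces the identity \eqref{eq:represent} with the degree balance \eqref{eq:degree.SOS}. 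The hardest ingredient is the quantitative regularity of step 1: proving that the Lojasiewicz exponent for $g_j$ on $[-1,1]^n$ enters the construction polynomially, and that $\sqrt{\psi_j}$ is genuinely Lipschitz rather than merely H\"older. For the explicit case $m=1$ and $g_1 = x_n^2 - \|x'\|_2^2$, the defining function is a single quadratic form, so $\psi_1$ can be written explicitly with exponent $1/2$, and a careful accounting of constants through Reznick's quadratic-in-degree and linear-in-$\Theta$ bound should yield the announced $c = 65$.
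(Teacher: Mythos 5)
Your three-step outline (construct weights, approximate their square roots by Bernstein polynomials, apply Reznick) is the same strategy the paper uses, and Steps~2 and~3 as you describe them match the paper's argument closely, modulo two technical devices you omit: the Bernstein approximants are \emph{symmetrized} ($p_i(x)=\tfrac12[B_i(x)+B_i(-x)]$) and then \emph{homogenized} term-by-term by inserting powers of $\|x\|_2^2$, so that the final $\sigma_i$ are genuinely homogeneous and $q_i=p_i$ on $\mathbb{S}^{n-1}$; and the Lipschitz weight is extended to all of $\R^n$ via Kirszbraun's theorem before being fed to the Bernstein operator.

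The genuine gap is in your Step~1. You propose taking $\psi_j$ as a nonnegative fractional power of $|g_j|$ that vanishes to order at least two on $\{g_j=0\}$. This cannot work for two independent reasons. First, on $\{g_j\ge0\}$ the correction term $-\psi_j g_j$ is $\le 0$, so you are \emph{subtracting} a nonnegative quantity from $R_j$ precisely where you are trying to preserve nonnegativity; with $\psi_j=|g_j|^\beta$ there is no mechanism ensuring $R_j\ge g_j^{\beta+1}$ on $S_{j-1}\cap\{g_j>0\}$, and in general this inequality is false (e.g.\ near points of $\partial S_j$ where $R_j$ is small but $g_j>0$). Second, even on $\{g_j<0\}$, a weight $\psi_j$ that vanishes to order $\ge 2$ on $\{g_j=0\}$ makes $\psi_jg_j$ vanish to order $\ge 3$, so it cannot compensate any order-one negativity of $R_j$ just across the boundary. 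The paper's weight is built very differently: it sets $M_m:=\inf_{S_m\cap B(0,\sqrt n+m)}\tfrac{f+\varepsilon/2}{g_m}$ (a \emph{positive} constant, finite because $S$ has nonempty interior) and defines $\psi_m$ as the piecewise maximum
\begin{equation}
\psi_m(x)=\begin{cases}\max\{M_m,\tfrac{f(x)+\varepsilon/2}{g_m(x)}\}&\text{if }g_m(x)<0,\\[2pt] M_m&\text{otherwise,}\end{cases}
\end{equation}
so $\psi_m$ is bounded \emph{below} by $M_m>0$ (never vanishes), guarantees $f+\tfrac\varepsilon2-\psi_mg_m\ge0$ on $S_{m-1}\cap B(0,\sqrt n+m)$ by construction, and its Lipschitz regularity for $\sqrt{\psi_m}$ follows from $\psi_m\ge M_m>0$ together with a delicate analysis of the piecewise definition (not from vanishing order). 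The Lojasiewicz inequality enters not to \emph{define} $\psi_m$ but to bound $\psi_m$ from above polynomially in $\varepsilon^{-1}$ and to control the transition region $\{g_m\le-\delta_m\}$; the shrinking balls $B(0,\sqrt n+j)$ are what make the backward induction close. Without this specific construction (or another one achieving the same positivity, boundedness-below, and Lipschitz regularity with polynomially controlled constants), the rest of the proof does not get off the ground.
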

The proof of Theorem \ref{theo:complex.putinar.vasilescu} is postponed to Appendix  \ref{proof:complex.putinar.vasilesc}.
\begin{remark}
The proof of Theorem \ref{theo:complex.putinar.vasilescu} provides additional information that each SOS polynomial $\sigma_i$ involved in \eqref{eq:represent} can be chosen as the (single) square of a homogeneous polynomial, for $i\in[m]$.
\end{remark}
The following corollary is a direct consequence of Theorem \ref{theo:complex.putinar.vasilescu}.
\begin{corollary}
\label{eq:dehomo}
(Dehomogenized representation)
Let $g_1,\dots,g_m$ be polynomials such that  the semialgebraic set
\begin{equation}
    S:=\{x\in\R^n\,:\,g_1(x)\ge 0\dots,g_m(x)\ge 0\}
\end{equation} 
has nonempty interior.
Let $f$ be a polynomial nonnegative on $S$.
Denote $\df: = \lfloor \deg(f)/2\rfloor +1$.
Then there exist positive constants $\bar c$ and $c$ depending on $f,g_1,\dots,g_m$ such that for all $\varepsilon>0$, for all $k\in\N$ satisfying
\begin{equation}
 k\ge \bar c\varepsilon^{-c}\,,
\end{equation}
there exist SOS polynomials $\sigma_0,\dots,\sigma_m$ such that 
\begin{equation}
    \deg(\sigma_0)\le 2(k+\df)\quad\text{and}\quad\deg(\sigma_jg_j)\le 2(k+\df)\,,\,j=1,\dots,m\,,
\end{equation}
and 
\begin{equation}
   \theta^{k}(f+\varepsilon\theta^{\df})=\sigma_0+\sigma_1g_1+\dots+\sigma_mg_m\,,
\end{equation}
where $\theta:=1+\|x\|_2^2$.
Moreover, if $m=1$ and $g_1=1-\|x\|_2^2$, then $c=65$.
\end{corollary}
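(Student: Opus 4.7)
The approach is to deduce Corollary~\ref{eq:dehomo} from Theorem~\ref{theo:complex.putinar.vasilescu} by homogenization followed by dehomogenization. Introduce an auxiliary variable $x_{n+1}$, write $\tilde x:=(x,x_{n+1})\in\R^{n+1}$, and define $\tilde f(\tilde x):=x_{n+1}^{2\df} f(x/x_{n+1})$ and $\tilde g_j(\tilde x):=x_{n+1}^{e_j} g_j(x/x_{n+1})$, where $e_j:=2\lceil\deg(g_j)/2\rceil$ is the smallest even integer not less than $\deg(g_j)$. Because $\df=\lfloor\deg(f)/2\rfloor+1$ satisfies $2\df>\deg(f)$, the polynomial $\tilde f$ is homogeneous of degree exactly $2\df$ and every one of its monomials contains at least one factor of $x_{n+1}$; each $\tilde g_j$ is homogeneous of even degree $e_j$.

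Next I verify the hypotheses of Theorem~\ref{theo:complex.putinar.vasilescu} for the data $\tilde f,\tilde g_1,\dots,\tilde g_m$ in $n+1$ variables on
\begin{equation}
\tilde S:=\{\tilde x\in\R^{n+1}:\tilde g_j(\tilde x)\ge 0,\ j\in[m]\}.
\end{equation}
For any $x^\ast$ in the interior of $S$ one has $\tilde g_j(x^\ast,1)=g_j(x^\ast)>0$, so $(x^\ast,1)$ lies in the interior of $\tilde S$. If $\tilde x\in\tilde S$ with $x_{n+1}\ne 0$, then $x_{n+1}^{e_j}>0$ (since $e_j$ is even), so $g_j(x/x_{n+1})\ge 0$ for every $j$; hence $x/x_{n+1}\in S$ and $\tilde f(\tilde x)=x_{n+1}^{2\df} f(x/x_{n+1})\ge 0$. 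On the hyperplane $\{x_{n+1}=0\}$ the polynomial $\tilde f$ vanishes identically by the divisibility remark above, so $\tilde f\ge 0$ on all of $\tilde S$.

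Theorem~\ref{theo:complex.putinar.vasilescu} then supplies constants $\bar c,c>0$ such that, for every $\varepsilon>0$ and every $k\ge\bar c\varepsilon^{-c}$, there exist homogeneous SOS polynomials $\tilde\sigma_0,\dots,\tilde\sigma_m$ with $\deg\tilde\sigma_0=\deg(\tilde\sigma_j\tilde g_j)=2(k+\df)$ and
\begin{equation}
\|\tilde x\|_2^{2k}\bigl(\tilde f+\varepsilon\|\tilde x\|_2^{2\df}\bigr)=\tilde\sigma_0+\sum_{j\in[m]}\tilde\sigma_j\tilde g_j.
\end{equation}
Substituting $x_{n+1}=1$ converts $\|\tilde x\|_2^2$ into $\theta=1+\|x\|_2^2$, $\tilde f$ into $f$, $\tilde g_j$ into $g_j$, and each $\tilde\sigma_j(\cdot,1)$ into an SOS polynomial $\sigma_j\in\Sigma[x]$ whose degree is at most $\deg\tilde\sigma_j=2(k+\df)-e_j$. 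Combining with $\deg g_j\le e_j$ gives $\deg(\sigma_j g_j)\le 2(k+\df)$, and the identity becomes $\theta^k(f+\varepsilon\theta^{\df})=\sigma_0+\sum_{j\in[m]}\sigma_j g_j$, as required. For the ``moreover'' statement, when $m=1$ and $g_1=1-\|x\|_2^2$ one has $e_1=2$ and $\tilde g_1=x_{n+1}^2-\|x\|_2^2$; after relabeling so that $x_{n+1}$ plays the role of the distinguished last variable in Theorem~\ref{theo:complex.putinar.vasilescu}, this is exactly the special form covered by the ``moreover'' clause, yielding $c=65$.

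The only subtle point in the plan is verifying that $\tilde f\ge 0$ on \emph{all} of $\tilde S$, including the stratum $\{x_{n+1}=0\}$; this is precisely where the convention $\df=\lfloor\deg(f)/2\rfloor+1$ (rather than $\lceil\deg(f)/2\rceil$) matters, since the strict inequality $2\df>\deg(f)$ forces $\tilde f$ to be divisible by $x_{n+1}$ and thereby vanish at the projective hyperplane at infinity, removing the need for a separate argument there.
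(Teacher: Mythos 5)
Your proposal is correct and follows essentially the same route as the paper: homogenize $f$ to degree $2\df$ and each $g_j$ to the even degree $2d_{g_j}$, check that $\tilde S$ inherits a nonempty interior and that $\tilde f$ is nonnegative on $\tilde S$, apply Theorem~\ref{theo:complex.putinar.vasilescu}, and dehomogenize by setting $x_{n+1}=1$. The paper's proof is terser (deferring the nonnegativity of $\tilde f$ on $\tilde S$ and the degree bookkeeping to the reference \cite{mai2021positivity}), whereas you spell out explicitly why the choice $\df=\lfloor\deg(f)/2\rfloor+1$ forces $\tilde f$ to vanish on the hyperplane at infinity, which is the key reason nonnegativity on $\tilde S$ holds without further assumption; this is a useful clarification but not a different argument.
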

\begin{proof}
The proof of Corollary \ref{eq:dehomo} is similar to the proof of \cite[Theorems 4 and 5]{mai2021positivity}.
We recall the basic ingredients. 
Let $\tilde S$ be a homogenized version of $S$, defined by 
\begin{equation}
    \tilde S:=\{(x,x_{n+1})\in\R^{n+1}\,:\,\tilde g_j(x,x_{n+1})\ge 0\,,\,j\in[m]\}\,,
\end{equation}
 with $\tilde g_j ( x,x_{n + 1}) := x_{n + 1}^{2d_{g_j}} g_j ( x/x_{n + 1} )$ being the degree-$2d_{g_j}$ homogenization of $g_j$ and $d_{g_j}:=\lceil \deg(g_j)/2\rceil$, for $j\in[m]$.
Then the proof consists of three steps:
\begin{enumerate}
\item Prove that the degree-$2d_f$ homogenization of $f$, denoted by ${\tilde f}$, is nonnegative on $\tilde S$.
\item  Use Theorem \ref{theo:complex.putinar.vasilescu} to obtain a representation of $\tilde f$ together with the degree bound on SOS polynomials.
\item  Obtain a representation of $f$ by evaluating the representation of ${\tilde f}$ at $x_{n+1} = 1$.
\end{enumerate}
To apply Theorem \ref{theo:complex.putinar.vasilescu}, we need to show that if $S$ has nonempty interior, then $\tilde S$ has nonempty interior.
This statement holds since when $a$ belongs to the interior of $S$, one has $\tilde g_j ( a,1)=g_j(a)>0$, implying that  $(a,1)$ belongs to the interior of $\tilde S$.
\end{proof}
Note that the ice cream constraint $x_{n+1}^2-\|x\|_2^2$ is the degree-2 homogenization associated to the ball constraint $1-\|x\|_2^2$.
\subsection{Polynomials nonnegative on compact semialgebraic sets}

The following corollary is deduced from Corollary \ref{eq:dehomo}.
\begin{corollary}\label{coro:compact}
Let $g_1,\dots,g_m$ be polynomials such that $g_1:=L-\|x\|_2^2$ for some $L>0$ and the semialgebraic set
\begin{equation}
    S:=\{x\in\R^n\,:\,g_1(x)\ge 0\dots,g_m(x)\ge 0\}
\end{equation} 
has nonempty interior.
Let $f$ be a polynomial nonnegative on $S$.
Denote $\df: = \lfloor \deg(f)/2\rfloor +1$.
Then there exist positive constants $\bar c$ and $c$ depending on $f,g_1,\dots,g_m$ such that for all $\varepsilon>0$, for all $k\in\N$ satisfying
\begin{equation}
 k\ge \bar c\varepsilon^{-c}\,,
\end{equation}
there exist SOS polynomials $\sigma_0,\dots,\sigma_m$ such that 
\begin{equation}
    \deg(\sigma_0)\le 2(k+\df)\quad\text{and}\quad\deg(\sigma_jg_j)\le 2(k+\df)\,,\,j=1,\dots,m\,,
\end{equation}
and 
\begin{equation}
   (1+\|x\|_2^2)^{k}(f+\varepsilon)=\sigma_0+\sigma_1g_1+\dots+\sigma_mg_m\,.
\end{equation}
Moreover, if $m=1$ and $L=1$, then $c=65$.
\end{corollary}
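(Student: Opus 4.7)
The plan is to derive this corollary directly from Corollary~\ref{eq:dehomo} by using the ball constraint $g_1 = L-\|x\|_2^2$ to absorb the unwanted factor $\theta^{\df}$ (with $\theta := 1+\|x\|_2^2$) into the $g_1$-term of the decomposition. The key algebraic observation is that $(1+L)-\theta = g_1$, so the standard factorization $a^{\df}-b^{\df}=(a-b)\sum_{i=0}^{\df-1} a^{\df-1-i}b^i$ yields the polynomial identity
\begin{equation}
(1+L)^{\df} - \theta^{\df} \;=\; g_1 \cdot Q, \qquad Q := \sum_{i=0}^{\df-1}(1+L)^{\df-1-i}\,\theta^i.
\end{equation}
Since $\theta = 1^2 + x_1^2 + \dots + x_n^2$ is itself a sum of squares, the polynomial $Q$ is an SOS polynomial of degree $2(\df-1)$.

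Next I would apply Corollary~\ref{eq:dehomo} to $f$ with the rescaled accuracy $\tilde\varepsilon := \varepsilon/(1+L)^{\df}$, producing constants $\bar c_1, c_1$ such that for every $k \geq \bar c_1 \tilde\varepsilon^{-c_1}$ there exist SOS polynomials $\tilde\sigma_0,\dots,\tilde\sigma_m$ with the appropriate degree bounds satisfying
\begin{equation}
\theta^k(f + \tilde\varepsilon\,\theta^{\df}) \;=\; \tilde\sigma_0 + \sum_{j=1}^m \tilde\sigma_j\, g_j.
\end{equation}
Multiplying the identity $(1+L)^{\df} = \theta^{\df} + g_1 Q$ by $\tilde\varepsilon\,\theta^k$ allows me to replace $\tilde\varepsilon\,\theta^{k+\df}$ with $\varepsilon\,\theta^k - \tilde\varepsilon\,\theta^k g_1 Q$ in the display above, producing
\begin{equation}
\theta^k(f+\varepsilon) \;=\; \tilde\sigma_0 + \bigl(\tilde\sigma_1 + \tilde\varepsilon\,\theta^k Q\bigr)\, g_1 + \sum_{j=2}^m \tilde\sigma_j\, g_j.
\end{equation}
The new coefficient of $g_1$ remains SOS because the class of SOS polynomials is closed under addition and under products with SOS polynomials and nonnegative scalars.

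What is left is a routine degree check: $\deg(\theta^k Q) = 2k + 2(\df-1)$ while $\deg(\tilde\sigma_1) \leq 2(k+\df) - 2$, so the combined coefficient of $g_1$ has degree at most $2k + 2\df - 2$, and its product with $g_1$ fits inside the bound $2(k+\df)$; the other degree bounds are inherited verbatim from Corollary~\ref{eq:dehomo}. Setting $\bar c := \bar c_1 (1+L)^{\df c_1}$ and $c := c_1$ turns the threshold $k \geq \bar c_1 \tilde\varepsilon^{-c_1}$ into $k \geq \bar c\,\varepsilon^{-c}$, and the quantitative claim $c=65$ in the case $m=1$, $L=1$ is inherited directly from Corollary~\ref{eq:dehomo}, since in that case the condition $g_1 = 1 - \|x\|_2^2$ of that corollary is exactly met. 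I do not foresee any real obstacle here; the whole argument is a short algebraic reduction, and the only point to watch is confirming SOS-ness and degree of $Q$, which both follow immediately from the fact that $\theta$ itself is SOS.
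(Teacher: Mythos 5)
Your proposal is correct and follows essentially the same route as the paper: apply Corollary~\ref{eq:dehomo}, then use the identity $(L+1)^{\df}-\theta^{\df}=g_1\sum_{i=0}^{\df-1}(1+L)^{\df-1-i}\theta^i$ (your $Q$ is the paper's $s_1$) to absorb the $\theta^{\df}$ perturbation into the $g_1$-term. The only cosmetic difference is that you make the rescaling $\tilde\varepsilon=\varepsilon/(1+L)^{\df}$ explicit, whereas the paper applies Corollary~\ref{eq:dehomo} with the original $\varepsilon$, arrives at $\theta^k\bigl(f+\varepsilon(L+1)^{\df}\bigr)=\sigma_0+(\varepsilon s_1\theta^k+\sigma_1)g_1+\sum_{j\ge 2}\sigma_jg_j$, and leaves the substitution $\varepsilon\mapsto\varepsilon/(L+1)^{\df}$ implicit in the phrase ``which yields the desired result''; your write-up is if anything a bit more careful on that point.
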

\begin{proof}
By using Theorem \ref{eq:dehomo}, 
there exist positive constants $\bar c$ and $c$ depending on $f,g_1,\dots,g_m$ such that for all $\varepsilon>0$, for all $k\in\N$ satisfying
$
 k\ge \bar c\varepsilon^{-c}
$,
there exist SOS polynomials $\sigma_0,\dots,\sigma_m$ such that 
\begin{equation}
    \deg(\sigma_0)\le 2(k+\df)\quad\text{and}\quad\deg(\sigma_jg_j)\le 2(k+\df)\,,\,j=1,\dots,m\,.
\end{equation}
and 
\begin{equation}
   \theta^{k}(f+\varepsilon\theta^{\df})=\sigma_0+\sigma_1g_1+\dots+\sigma_mg_m\,,
\end{equation}
where $\theta:=1+\|x\|_2^2$.
In addition,
\begin{equation}
    (L+1)^{\df}-\theta^{\df}=(L-\|x\|_2^2)\sum_{j=0}^{\df-1}(L+1)^{\df-1-j}\theta^{j}=s_1g_1\,,
\end{equation}
where $s_1=\sum_{j=0}^{\df-1}(L+1)^{\df-1-j}\theta^{j}$ is an SOS polynomial of degree $2(\df-1)$.
From this,
\begin{equation}
\begin{array}{rl}
    \theta^{k}[f+\varepsilon (L+1)^{\df}] &   =\theta^{k}(f+\varepsilon\theta^{\df})+\varepsilon\theta^{k}[(L+1)^{\df}-\theta^{\df}]\\
     & =\sigma_0+(\varepsilon s_1\theta^k+\sigma_1)g_1+\sum_{j=2}^m\sigma_jg_j\,,
\end{array}
\end{equation}
which yields the desired result.
\end{proof}
\begin{remark}
We can apply the technique used in the proof of Corollary \ref{coro:compact}, which consists of replacing the perturbation $\varepsilon\theta^{\df}$ by $\varepsilon$, to represent polynomials nonnegative on $\R^n$. 
Let us consider an arbitrary large positive constant $L$ and a polynomial $f$ of degree $2\df$ which is nonnegative on $\R^n$. 
Then, thanks to \cite[Theorem 3.2]{mai2021positivity}, for any $\varepsilon>0$, for all $k\in\N$ such that $k\ge \mathcal{O}(\varepsilon^{-1})$, $\theta^k(f+\varepsilon\theta^{\df})$ is an SOS polynomial, so that $\theta^k(f+\varepsilon)=\sigma_0+\sigma_1(L-\|x\|_2^2)$ for some SOS polynomials $\sigma_i$, $i=0,1$.
This is the so-called ``big ball trick''.
This representation yields a linear convergence rate $\mathcal{O}(\varepsilon^{-1})$ for the minimization of  polynomials on $\R^n$.
\end{remark}

\section{Polynomial optimization}
This section is concerned with some applications to polynomial optimization.

Consider the following POP:
\begin{equation}\label{eq:constrained.problem.poly}
\begin{array}{l}
f^\star:=\inf\limits_{x \in S} f( x)\,,
\end{array}
\end{equation}
where $f\in\R[x]$ and
\begin{equation}\label{eq:semial.set.def.2}
    S=\{x\in\R^n\,:\,g_j(x)\ge 0\,,\,j\in[m]\}\,,
\end{equation}
for some $g_j\in\R[x]$, $j\in[m]$.
 Assume that $S$ has nonempty interior and $f^\star>-\infty$. 

Recall that $\theta=1+\|x\|_2^2$. Let $\df := \lfloor \deg(f)/2\rfloor +1$ and $d_{g_j}: = \lceil {\deg ( {{g_j}} )/2} \rceil $, $j\in\{0\}\cup[m]$.

\subsection{General case}
In this subsection, we improve the convergence rate of the Moment-SOS hierarchy described in \cite[Theorem 4.3]{mai2021positivity},  based on Putinar-Vasilescu's Positivstellensatz \cite{putinar1999positive}.

Let $\varepsilon>0$ be fixed.
Consider the hierarchy of semidefinite programs indexed by $k\in\N$: 
\begin{equation}\label{eq:dual-sdp}
\begin{array}{rl}
\tau_k^{(\varepsilon)}: = \inf &{L_y}( {{\theta ^k}( {f +  \varepsilon {\theta ^{\df}}} )} )\\
\text{s.t.}&y = {(y_\alpha )_{\alpha  \in \N^n_{2( {\df + k})}}} \subset \R\,,\\
&{M_{k + \df}}( {y})\succeq 0\,,\\
&{M_{k + \df - {d_{g_j}}}}( {{g_j}y}) \succeq 0 ,\;j \in[m]\,,\\
&{L_y}( {{\theta ^k}}) = 1\,.
\end{array}
\end{equation}
\begin{theorem}\label{theo:constr.theo}
For every $k\in\N$, the dual of \eqref{eq:dual-sdp} reads as:
\begin{equation}\label{eq:primal.problem}
\begin{array}{l}
   \rho _k^{ (\varepsilon)}:= \sup_{\lambda  \in \R} \{ {\lambda  :\ {\theta ^k}\,( {f - \lambda +  \varepsilon \,{\theta ^{\df}} } ) \in \mathcal Q_{k + \df}} \}\,.
\end{array}
\end{equation}
The following statements hold:
\begin{enumerate}
\item For all $k\in\N$,
\begin{equation}
    \rho_k^{(\varepsilon)}\le\rho_{k+1}^{(\varepsilon)}\le f^\star\,.
\end{equation}
\item Assume that problem \eqref{eq:constrained.problem.poly} has an optimal solution $x^\star$. 
Then there exists positive constants $\bar c$ and $c$ depending on $f,g_1,\dots,g_m$ such that for all $k\ge \bar c\varepsilon^{-c}$, 
\begin{equation}
    0\le \rho_k^{(\varepsilon)}-f^\star \le \varepsilon \theta {( {{x^\star}})^{\df}}\,.
\end{equation}
\item Strong duality holds for all orders $k$ of the  primal-dual problems \eqref{eq:dual-sdp}-\eqref{eq:primal.problem}.
\end{enumerate}
\end{theorem}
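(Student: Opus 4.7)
The plan is to derive \eqref{eq:primal.problem} as the Lagrangian dual of \eqref{eq:dual-sdp}, then to address the three numbered items in sequence. To compute the dual, assign a scalar multiplier $\lambda\in\R$ to the normalization $L_y(\theta^k)=1$ and PSD multipliers $G_0,\dots,G_m$ to the moment/localizing conic constraints. Writing $\sigma_j(x):=v^\top G_j v$ with $v=v_{k+\df-d_{g_j}}(x)$ (and the convention $d_{g_0}:=0$), one has $\trace(G_j M_{k+\df-d_{g_j}}(g_j y))=L_y(\sigma_j g_j)$, so the Lagrangian reduces to $\lambda + L_y\bigl(\theta^k(f-\lambda+\varepsilon\theta^{\df})-\sigma_0-\sum_{j\in[m]}\sigma_j g_j\bigr)$. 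Since $y$ is otherwise unconstrained, a finite infimum forces the bracketed polynomial to vanish identically, which is precisely the membership $\theta^k(f-\lambda+\varepsilon\theta^{\df})\in\mathcal Q_{k+\df}$, yielding \eqref{eq:primal.problem}.

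For item (1), monotonicity follows because $\theta\in\Sigma[x]$: given $\theta^k(f-\lambda+\varepsilon\theta^{\df})=\sigma_0+\sum_j\sigma_j g_j$ with the degree bounds at level $k+\df$, multiplying through by $\theta$ produces $\theta\sigma_0,\theta\sigma_j\in\Sigma[x]$ of degrees at most $2(k+1+\df)$, so $\lambda$ is feasible at level $k+1$ and $\rho_k^{(\varepsilon)}\le \rho_{k+1}^{(\varepsilon)}$. The upper bound comes from weak duality: \eqref{eq:dual-sdp} is feasible at the moment sequence $y$ of $\theta(x^\star)^{-k}\delta_{x^\star}$, since $g_j(x^\star)\ge 0$ makes each localizing matrix a nonnegative multiple of a rank-one PSD matrix and $L_y(\theta^k)=1$ by construction. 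The resulting objective value equals $f(x^\star)+\varepsilon\theta(x^\star)^{\df}=f^\star+\varepsilon\theta(x^\star)^{\df}$, so $\rho_k^{(\varepsilon)}\le \tau_k^{(\varepsilon)}\le f^\star+\varepsilon\theta(x^\star)^{\df}$.

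Item (2) is the core claim and is where the main work sits. The lower bound $\rho_k^{(\varepsilon)}\ge f^\star$ comes from applying Corollary~\ref{eq:dehomo} to the polynomial $f-f^\star$, which is nonnegative on $S$ by definition of the infimum: the corollary supplies constants $\bar c,c$ depending on $f,g_1,\dots,g_m$ such that for every $\varepsilon>0$ and every $k\ge \bar c\varepsilon^{-c}$ the polynomial $\theta^k(f-f^\star+\varepsilon\theta^{\df})$ lies in $\mathcal Q_{k+\df}$ with the required degree bounds. Hence $\lambda=f^\star$ is feasible in \eqref{eq:primal.problem}, yielding $\rho_k^{(\varepsilon)}\ge f^\star$. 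Combined with the weak-duality upper bound from (1), this gives $0\le \rho_k^{(\varepsilon)}-f^\star\le \varepsilon\theta(x^\star)^{\df}$.

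For item (3), invoke Slater's condition for \eqref{eq:dual-sdp}. Choose a closed ball $B\subset\mathrm{int}(S)$ small enough that every $g_j$ is strictly positive on $B$, take any Borel probability measure $\mu$ supported on $B$ with a continuous density, and rescale it by $(\int\theta^k d\mu)^{-1}$ so that the associated moment sequence $y$ satisfies $L_y(\theta^k)=1$. Because the support of $\mu$ has nonempty $n$-dimensional interior, $M_{k+\df}(y)\succ 0$; because $\min_B g_j>0$, each $M_{k+\df-d_{g_j}}(g_j y)\succ 0$. Strict primal feasibility then yields strong duality and dual attainment at every order $k$ by the standard SDP theory of \cite{ben2001lectures}. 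The genuine obstacle is the lower bound in (2), which Corollary~\ref{eq:dehomo} handles; the remaining pieces reduce to routine SDP bookkeeping.
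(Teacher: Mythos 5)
Your proposal is essentially correct and, in method, matches the paper's own (which simply points to \cite[Theorem 4.3]{mai2021positivity} and names the key ingredients you use: Corollary~\ref{eq:dehomo} for item (2) and Slater's condition for item (3)). Your Lagrangian derivation of \eqref{eq:primal.problem}, the multiply-by-$\theta$ argument for monotonicity, the application of Corollary~\ref{eq:dehomo} to $f-f^\star$ for the lower bound in (2), the evaluation at $x^\star$ for the upper bound, and the interior-ball Slater point for (3) are all sound and complete.

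One discrepancy is worth flagging explicitly rather than silently. Item (1) as stated asserts $\rho_k^{(\varepsilon)}\le f^\star$ for every $k$, but this cannot hold: feasibility of $\lambda$ in \eqref{eq:primal.problem} only forces $f(x)-\lambda+\varepsilon\theta(x)^{\df}\ge 0$ for $x\in S$, hence $\lambda\le\inf_{x\in S}\bigl(f(x)+\varepsilon\theta(x)^{\df}\bigr)$, a quantity that is $\ge f^\star$; moreover, item (2) asserts $\rho_k^{(\varepsilon)}\ge f^\star$ for $k$ large, which would force equality with item (1)'s bound, something Corollary~\ref{eq:dehomo} does not deliver. What you actually prove, $\rho_k^{(\varepsilon)}\le f^\star+\varepsilon\theta(x^\star)^{\df}$, is the correct and intended inequality, and it is also what the upper bound of item (2) relies on. Note further that your proof of this upper bound in (1) already invokes the optimal solution $x^\star$, whose existence is only hypothesised in item (2); this dependence should be stated rather than left implicit (without $x^\star$ or compactness of $S$, $\inf_{x\in S}\theta(x)^{\df}$ need not be finite). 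In short: the argument is right, but you should call out the typo in item (1) and the extra hypothesis you are using there instead of quietly substituting the corrected statement.
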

The proof of Theorem~\ref{theo:constr.theo} is exactly the same as the proof of \cite[Theorem 7]{mai2021positivity}.
The second statement relies on Corollary \ref{eq:dehomo}.
The third statement is due to the Slater condition \cite[Theorem 3.1]{vandenberghe1996semidefinite} since $S$ has nonempty interior.

\subsection{Compact case}
In this subsection, we consider the case when $S$ is compact by assuming that a ball constraint is present. 
We can then remove the perturbation term $\varepsilon\theta^{\df}$ in the hierarchy based on Putinar-Vasilescu's Positivstellensatz, described in the previous subsection.

Assume that $g_1=L-\|x\|_2^2$ for some $L>0$.
Consider the hierarchy of semidefinite programs indexed by $k\in\N$: 
\begin{equation}\label{eq:dual-sdp.0}
\begin{array}{rl}
{\tau_k^{(0)}}: = \inf &{L_y}( {\theta ^k}f  )\\
\text{s.t.}&y = {(y_\alpha )_{\alpha  \in \N^n_{2( {\df + k})}}} \subset \R\,,\\
&{M_{k + \df}}( {y})\succeq 0\,,\\
&{M_{k + \df - d_{g_j}}}( {{g_j}y}) \succeq 0 ,\;j \in[m]\,,\\
&{L_y}( {{\theta ^k}}) = 1\,.
\end{array}
\end{equation}
\begin{theorem}\label{theo:constr.theo.0}
For every $k\in\N$, the dual of \eqref{eq:dual-sdp.0} reads as:
\begin{equation}\label{eq:primal.problem.0}
\begin{array}{l}
   {\rho _k^{(0)}}:= \sup_{\lambda\in\R} \{ {\lambda\, :\, {\theta ^k}\,( {f - \lambda } ) \in \mathcal Q_{k + \df}} \}\,.
\end{array}
\end{equation}
The following statements hold:
\begin{enumerate}
\item For all $k\in\N$,
\begin{equation}
    \rho_k^{(0)}\le\rho_{k+1}^{(0)}\le f^\star\,.
\end{equation}
\item There exist positive constants $\bar c$ and $c$ depending on $f,g_1,\dots,g_m$ such that 
\begin{equation}
    0\le f^\star-\rho_k^{(0)}\le \left(\frac{\bar c}{k}\right)^{\frac{1}{c}}
\end{equation}
\item Strong duality holds for all orders $k$ of the  primal-dual problems \eqref{eq:dual-sdp.0}-\eqref{eq:primal.problem.0}.
\end{enumerate}
\end{theorem}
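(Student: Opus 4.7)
The plan is to mirror the structure of the proof of Theorem \ref{theo:constr.theo}, but without the perturbation $\varepsilon\theta^{\df}$: since $g_1 = L - \|x\|_2^2$ makes $S$ compact, Corollary \ref{coro:compact} now provides a decomposition of $\theta^k(f+\varepsilon)$ directly in the quadratic module, and strong duality will follow from a Slater-type argument using the nonempty interior of $S$. The genuine analytic content sits entirely in Corollary \ref{coro:compact}; the present theorem amounts to translating its threshold $k \ge \bar c\varepsilon^{-c}$ into a rate on the optimal value gap.

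For statement (1), I would first establish monotonicity $\rho_k^{(0)} \le \rho_{k+1}^{(0)}$ by multiplying any feasible certificate $\theta^k(f-\lambda) = \sigma_0 + \sum_{j\in[m]} \sigma_j g_j$ through by $\theta = 1 + \|x\|_2^2$. Since $\theta$ is itself a sum of squares of degree $2$, the products $\theta\sigma_0$ and $\theta\sigma_j$ remain SOS and their degrees increase by exactly $2$, producing a valid certificate at order $k+1$. The upper bound $\rho_k^{(0)} \le f^\star$ will follow by evaluating the identity at any $x \in S$: the right-hand side is nonnegative there and $\theta^k(x) > 0$, forcing $f(x) \ge \lambda$, hence $\lambda \le f^\star$.

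For statement (2), the key step is to apply Corollary \ref{coro:compact} to the shifted polynomial $f - f^\star$, which is nonnegative on $S$ by definition of $f^\star$. The corollary returns constants $\bar c, c > 0$ (depending only on $f, g_1, \dots, g_m$) such that for every $\varepsilon > 0$ and every $k \in \N$ with $k \ge \bar c\varepsilon^{-c}$, there exist SOS polynomials of the prescribed degrees satisfying
\begin{equation*}
\theta^k(f - f^\star + \varepsilon) = \sigma_0 + \sum_{j\in[m]} \sigma_j g_j\,.
\end{equation*}
This shows that $\lambda = f^\star - \varepsilon$ is feasible for \eqref{eq:primal.problem.0}, hence $\rho_k^{(0)} \ge f^\star - \varepsilon$. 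Choosing $\varepsilon = (\bar c/k)^{1/c}$ makes the threshold inequality an equality, yielding the claimed bound $f^\star - \rho_k^{(0)} \le (\bar c/k)^{1/c}$ for all $k \ge 1$.

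For statement (3), strong duality will follow by verifying Slater's condition on the primal \eqref{eq:dual-sdp.0}. I would pick an open ball $B \subset \mathrm{int}(S)$ on which all constraints $g_j$ remain strictly positive, and take $y$ to be the moment sequence of the Lebesgue measure restricted to $B$, rescaled so that $L_y(\theta^k) = 1$. Because the underlying measure has full support on a set where every $g_j$ is strictly positive, both $M_{k+\df}(y)$ and each $M_{k+\df - d_{g_j}}(g_j y)$ will be strictly positive definite, producing a Slater point, and the standard SDP duality theorem \cite[Theorem 3.1]{vandenberghe1996semidefinite} will then close the gap. I do not anticipate a real obstacle: the only mildly nontrivial bookkeeping is to check that the degree constraints in $\mathcal Q_{k+\df}$ match those coming out of Corollary \ref{coro:compact}, which is the same routine check as in Theorem \ref{theo:constr.theo}.
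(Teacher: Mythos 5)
Your proposal is correct and follows essentially the same route as the paper: apply Corollary~\ref{coro:compact} to $f - f^\star$ and invert the threshold $k\ge\bar c\varepsilon^{-c}$ by setting $\varepsilon=(\bar c/k)^{1/c}$ for statement (2), prove monotonicity by multiplying certificates through by $\theta$ and the upper bound by evaluating on $S$ for statement (1), and invoke a Slater point coming from nonempty interior of $S$ for statement (3). The paper only sketches statements (1) and (3) by reference to Theorem~\ref{theo:constr.theo} and ultimately to \cite{mai2021positivity}; your filled-in details are consistent with that outline.
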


\begin{proof}
The first and third statements of  Theorem~\ref{theo:constr.theo.0}  can be proved similarly to the ones of Theorem~\ref{theo:constr.theo}.
Let us proof the second statement.
By using Corollary \ref{coro:compact}, there exist positive constants $\bar c$ and $c$ depending on $f,g_1,\dots,g_m$ such that for any $\varepsilon>0$, for all $k\ge \bar c\varepsilon^{-c}$, 
\begin{equation}
    \theta^{k}(f-f^\star+\varepsilon)\in \mathcal Q_{k+\df}\,.
\end{equation}
Let $K\in\N$. Set $\epsilon=(\frac{\bar c}{K})^{\frac{1}{c}}$. Then $\epsilon>0$ and $K=\bar c\epsilon^{-c}$, so that
\begin{equation}
    \theta^{K}(f-f^\star+\epsilon)\in \mathcal Q_{K+\df}\,.
\end{equation}
It implies that $f^\star-\epsilon$ is a feasible solution of \eqref{eq:primal.problem.0} with relaxation order $K$, yielding
\begin{equation}
    0\le f^\star-\rho_K^{(0)}\le f^\star - (f^\star-\epsilon) = \epsilon= \left(\frac{\bar c}{K}\right)^{\frac{1}{c}}\,.
\end{equation}
Hence the desired result follows.
\end{proof}

\section{Conclusion}
We have provided a new degree bound on the  sum-of-squares (SOS) polynomials involved in Putinar-Vasilescu’s Positivstellensatz.
The Moment-SOS hierarchy of semidefinite relaxations based on this Positivstellensatz provide a sequence of lower bounds on the minimum of a polynomial on a basic compact semialgebraic set.
Moreover, this sequence of lower bounds converges to the minimum  with the rate $\mathcal{O}(\varepsilon^{-c})$, at prescribed  accuracy $\varepsilon>0$. 
We emphasize that this improves upon the previous convergence rate $\mathcal{O}(\exp(\varepsilon^{-c}))$ stated by Nie and Schweighofer \cite{nie2007complexity} for Putinar's Positivstellensatz.

A topic of further investigation is the analysis of the convergence rate of the Moment-SOS hierarchy for lower bounds in some special cases of basic (compact) semialgebraic sets. 
A fairly straightforward idea is to find the explicit constant $\alpha$ in the Łojasiewicz inequality stated in Lemma \ref{lem:lojas.ineq}.
We could then proceed analogously to the proof of the rate $\mathcal{O}(\varepsilon^{-65})$ for the minimization of a polynomial on the unit ball.
\if{
\hoang{
\paragraph{Discussion.}
Recently people from theoretical computer science expect the exact value in polynomial optimization.
They like to use non-prescribed denominators in the certificates with SOS of rational functions and therefore have to run several times the SDPs with non-variables (but guessed) lower bound and use bisection to adjust the lower bound.

Clearly Reznick and Putinar-Vasilescu provide the positivity certificates for polynomials nonnegative on noncompact semialgebraic sets with prescribed denominators. 
It leads to the perturbation in the values returned by the hierarchy of semidefinite relaxations based on these certificates. 

Of course, it should be better to have positivity certificates with non-prescribed denominators and then obtain the exact value (maybe as expected) by solving the SDP relaxations for polynomial optimization. 
However it cannot be denied that many state of the art SDP solvers, e.g, interior-point method and first-order methods,  still provide the perturbed value. 
Nevertheless, bisection method indeed does not give us the exact values in practice.
}
}\fi
\paragraph{Acknowledgements.}
The first author was supported by the MESRI funding from EDMITT.
The second author was supported by the Tremplin ERC Stg Grant ANR-18-ERC2-0004-01 (T-COPS project) and by the FMJH Program PGMO (EPICS project) and  EDF, Thales, Orange et Criteo.
This work has benefited from the European Union's Horizon 2020 research and innovation programme under the Marie Sklodowska-Curie Actions, grant agreement 813211 (POEMA) as well as from the AI Interdisciplinary Institute ANITI funding, through the French ``Investing for the Future PIA3'' program under the Grant agreement n$^{\circ}$ANR-19-PI3A-0004.

\appendix
\section{Appendix}

\subsection{Preliminary material}
This subsection presents some important lemmas that we use to prove the main results.

Given $\Omega\subset \R^n$, the distance of $a\in\R^n$ to $\Omega$ is denoted by $\dist(a,\Omega)$.
Denote by $B(a,r)$ (resp. $B^\circ (a,r)$) the closed (resp. open) ball centered at $a\in\R^n$ with radius $r>0$.

\begin{lemma}\label{lem:lojas.ineq}(Łojasiewicz inequality \cite[Corollary 2.6.7]{bochnak2013real})
Let $r>0$ and the semialgebraic set $S:=\{x\in\R^n\,:\,g_j(x)\ge 0\,,\,j\in[m]\}$,
where $g_1,\dots,g_m$ are polynomials.
Then there exist positive constants $\alpha$ and $C$ such that, for all $x$ in $ B(0,r)$,
\begin{equation}
    \dist(x,S)^{\alpha}\le -C \min\{g_1(x),\dots,g_m(x),0\}\,.
\end{equation}
\end{lemma}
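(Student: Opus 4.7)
The plan is to reduce the claim to the classical Łojasiewicz inequality for semialgebraic functions on a compact set: if $\varphi,\psi$ are continuous semialgebraic functions on a compact semialgebraic $K\subset\R^n$ with $\varphi^{-1}(0)\cap K\subseteq \psi^{-1}(0)\cap K$, then there exist $N\in\N^*$ and $C>0$ such that $|\psi|^N\le C|\varphi|$ on $K$.

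First I would introduce the ``defect'' function
\begin{equation*}
\varphi(x):=-\min\{g_1(x),\ldots,g_m(x),0\}=\max\{0,-g_1(x),\ldots,-g_m(x)\}\,.
\end{equation*}
Observe that $\varphi$ is nonnegative and continuous on $\R^n$; because the graphs of $g_j$ are semialgebraic and taking finite maxima/minima preserves semialgebraicity, $\varphi$ is semialgebraic. Moreover $\varphi(x)=0$ iff $g_j(x)\ge 0$ for every $j\in[m]$, i.e.\ iff $x\in S$. Next I would note that $\psi(x):=\dist(x,S)$ is also a continuous semialgebraic function (the distance to a semialgebraic set is semialgebraic by Tarski--Seidenberg), with $\psi^{-1}(0)=\overline{S}=S$ since $S$ is closed.

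With both $\varphi$ and $\psi$ in hand, I would take $K:=B(0,r)$, which is compact semialgebraic, and verify the hypothesis of the semialgebraic Łojasiewicz inequality: $\varphi(x)=0$ on $K$ iff $x\in S\cap K$ iff $\psi(x)=0$ on $K$. The inequality then yields $N\in\N^*$ and $C>0$ such that
\begin{equation*}
\psi(x)^{N}\le C\,\varphi(x)\qquad\text{for every }x\in B(0,r)\,,
\end{equation*}
which, after substituting back the definitions of $\psi$ and $\varphi$ and setting $\alpha:=N$, is exactly the stated inequality.

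The main obstacle is of course the general semialgebraic Łojasiewicz inequality itself, which is the nontrivial input; the reference \cite{bochnak2013real} establishes it via the curve selection lemma and the fact that a continuous semialgebraic function on a compact semialgebraic set attains a \emph{rational} exponent relating its rate of vanishing to that of any other such function with the same zero locus. Once this is invoked, the rest of the argument is just packaging: recognizing $-\min\{g_1,\dots,g_m,0\}$ as a well-behaved semialgebraic surrogate for the defect of membership in $S$, and matching its zero set with that of $\dist(\cdot,S)$ on the ball $B(0,r)$.
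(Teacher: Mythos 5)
Your proposal is correct, and it is exactly the derivation the paper has in mind when it cites \cite[Corollary~2.6.7]{bochnak2013real}: the paper gives no proof of this lemma, treating it as a direct consequence of the general semialgebraic Łojasiewicz inequality, which says that for continuous semialgebraic $\varphi,\psi$ on a compact semialgebraic $K$ with $\varphi^{-1}(0)\cap K\subseteq\psi^{-1}(0)\cap K$, there are $N\in\N^*$ and $C>0$ with $|\psi|^N\le C|\varphi|$ on $K$. Your instantiation with $\varphi=-\min\{g_1,\dots,g_m,0\}$, $\psi=\dist(\cdot,S)$, and $K=B(0,r)$ is the right one: $\varphi$ is continuous, nonnegative, and semialgebraic with $\varphi^{-1}(0)=S$; $\psi$ is continuous semialgebraic with $\psi^{-1}(0)=S$ (as $S$ is closed); and the zero sets agree on $K$, so the general inequality produces the stated bound with $\alpha:=N$. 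One small point worth noting: you only need the one-sided inclusion $\varphi^{-1}(0)\cap K\subseteq\psi^{-1}(0)\cap K$ to invoke the corollary, although here equality holds; and the constant $C$ you obtain is automatically positive. So the proposal is sound and matches the intended route.
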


Given an open set $U\subset\R^n$ and a differentiable function $\varphi:U\to \R$, denote by $\nabla \varphi(x)=[\partial_{x_1} \varphi(x),\dots,\partial_{x_n} \varphi(x)]$ the gradient of $\varphi$ at $x\in U$. 
Given $x = (x_1,\dots,x_n) \in \R^n$, let $x':=(x_1,\dots,x_{n-1})$.
\begin{lemma}\label{lem:lojas.ineq.ball}(Łojasiewicz inequality with ice cream constraint)
Let $g:=x_n^2-\|x'\|_2^2$ and $Z:=\{x\in\R^n\,:\,g(x)= 0\}$.
Then for all $x\in\R^n$,
\begin{equation}\label{eq:Loja.ball}
    \dist(x,Z)^{2}\le \frac{|g(x)|}{2} \,.
\end{equation}
\end{lemma}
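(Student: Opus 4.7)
The plan is to compute $\dist(x,Z)$ explicitly by exploiting the symmetries of the cone $Z$, and then compare it with $|g(x)|$ via an elementary factorization. Because $Z=\{|x_n|=\|x'\|_2\}$ is invariant under orthogonal transformations of the first $n-1$ coordinates and under the reflection $x_n\mapsto -x_n$, I would argue that a closest point $y\in Z$ to a given $x=(x',x_n)$ can be sought with $y'$ parallel to $x'$ (any direction if $x'=0$) and $\sign(y_n)=\sign(x_n)$. Writing $r:=\|x'\|_2$ and parametrizing the relevant half of $Z$ as $y=(t\,x'/r,\ \sign(x_n)\,t)$ with $t\ge 0$, the problem reduces to minimizing $(r-t)^2+(|x_n|-t)^2$ over $t\ge 0$.

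A one-variable calculation yields the unconstrained optimizer $t^\star=(r+|x_n|)/2$, which is automatically admissible since $r,|x_n|\ge 0$, and produces the identity
\[
\dist(x,Z)^2 \;=\; \tfrac{1}{2}\bigl(|x_n|-\|x'\|_2\bigr)^2.
\]
Next I would combine this with the factorization
\[
|g(x)| \;=\; \bigl|x_n^2-\|x'\|_2^2\bigr| \;=\; \bigl|\,|x_n|-\|x'\|_2\,\bigr|\cdot\bigl(|x_n|+\|x'\|_2\bigr)
\]
and the elementary inequality $\bigl|\,|x_n|-\|x'\|_2\,\bigr|\le |x_n|+\|x'\|_2$ to deduce
\[
2\,\dist(x,Z)^2 \;=\; \bigl(|x_n|-\|x'\|_2\bigr)^2 \;\le\; \bigl|\,|x_n|-\|x'\|_2\,\bigr|\cdot\bigl(|x_n|+\|x'\|_2\bigr) \;=\; |g(x)|,
\]
which is exactly \eqref{eq:Loja.ball}.

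I do not expect any serious obstacle: the argument is a direct geometric computation whose only fine points are checking that the optimizer $t^\star$ lies in the admissible region $t\ge 0$ and handling the degenerate case $x'=0$ separately (in which case the minimization trivially returns $\dist(x,Z)=|x_n|/\sqrt 2$, in agreement with the general formula). Note that the general Łojasiewicz inequality of Lemma \ref{lem:lojas.ineq} only asserts the existence of some exponent $\alpha$ and constant $C$, whereas the sharp values $\alpha=2$ and $C=1/2$ obtained here genuinely come from the specific quadratic geometry of the ice-cream cone.
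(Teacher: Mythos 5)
Your argument is correct, and it reaches the same exact distance formula as the paper's proof, but by a genuinely different route. The paper sets up the Lagrangian/KKT system for $\min_y\{\|x-y\|_2^2 : g(y)=0\}$ and runs a case analysis on the multiplier $\mu$ (including the degenerate values $\mu=\pm1$), arriving at $\dist(x,Z)^2=\tfrac12\min\{(x_n-\|x'\|_2)^2,(x_n+\|x'\|_2)^2\}$, which it then bounds by $\tfrac12\sqrt{(x_n-\|x'\|_2)^2(x_n+\|x'\|_2)^2}=\tfrac12|g(x)|$. You instead use the rotational symmetry in $x'$ and the reflection symmetry in $x_n$ to reduce to a one-dimensional quadratic in $t$, obtaining directly $\dist(x,Z)^2=\tfrac12(|x_n|-\|x'\|_2)^2$; since $\min\{(x_n-r)^2,(x_n+r)^2\}=(|x_n|-r)^2$ for $r\ge 0$, this is the same quantity, and your final step (factorizing $|g|$ and using $\bigl||x_n|-\|x'\|_2\bigr|\le |x_n|+\|x'\|_2$) is equivalent to the paper's geometric-mean bound. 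Your route has the advantage of being self-contained and avoiding the multiplier case split, while the paper's KKT computation generalizes more mechanically to constraints without such clean symmetry. One small point worth making explicit if you keep this version: the reflection argument that the nearest point can be taken with $\sign(y_n)=\sign(x_n)$ relies on $(x_n-y_n)^2-(x_n+y_n)^2=-4x_ny_n\ge 0$ whenever $x_n$ and $y_n$ have opposite signs, and similarly the projection of $y'$ onto the direction of $x'$ only decreases $\|x'-y'\|_2$ while preserving $\|y'\|_2$; you state these as obvious symmetry facts, which is fine, but spelling them out would make the proof airtight.
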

\begin{proof}
If $x\in Z$, both sides of \eqref{eq:Loja.ball} are zeros.
Let $x\in\R^n\backslash Z$ be fixed.
Then $d(x,Z)^2=\min_y\{\|x-y\|_2^2\,:\,g(y)=0\}$.
Assume that $(y,\mu)\in\R^n\times \R$ satisfies the Karush–Kuhn–Tucker conditions:
\begin{equation}\label{eq:kkt}
    \begin{cases}
    \nabla_y \|x-y\|_2^2 =\mu \nabla_y g(y)\,,\\
    g(y)=0\,.\\
    \end{cases}
\end{equation}
The first condition of \eqref{eq:kkt} implies that $2(x-y)=\mu\begin{bmatrix}
-2y'\\
2y_n
\end{bmatrix}$, so $x'-y'=-\mu y'$ and $x_n-y_n=\mu y_n$.
Assume that $\mu\notin \{ 1,-1\}$.
Then $y'=\frac{x'}{1-\mu}$ and $y_n=\frac{x_n}{1+\mu}$.
Since $g(y)=y_n^2-\|y'\|_2^2=0$, $y_n=\pm \|y'\|_2$.

Let us consider the first case $y_n=\|y'\|_2$.
Then $\frac{x_n}{1+\mu}=\frac{\|x'\|_2}{1-\mu}$.
It implies that $\mu=\frac{x_n-\|x'\|_2}{x_n+\|x'\|_2}$.
Note that $x_n\ne -\|x'\|_2$ since $g(x)\ne 0$.
From this, $y'=\frac{(x_n+\|x'\|_2)x'}{2\|x'\|_2}$ and $y_n=\frac{x_n+\|x'\|_2}{2}$.
Thus, $\|x-y\|_2^2=\frac{(x_n-\|x'\|_2)^2}{2}$.

Similarly, if we consider the case $y_n=-\|y'\|_2$, then $\|x-y\|_2^2=\frac{(x_n+\|x'\|_2)^2}{2}$.

Let us consider the case of $\mu\in\{1,-1\}$. Assume that $\mu=1$. Then $x'=0$ and $y_n=\frac{x_n}{2}$.
From this and the fact that $0=g(y)=y_n^2-\|y'\|_2^2$, we obtain  $\|y'\|_2^2=\frac{x_n^2}{4}$. 
It implies that $\|x-y\|_2^2=\|y'\|_2^2+(x_n-y_n)^2=\frac{x_n^2}{4}+\frac{x_n^2}{4}=\frac{x_n^2}{2}=\frac{(x_n-\|x'\|_2)^2}{2}$.
Thus, $\|x-y\|_2^2=\frac{(x_n-\|x'\|_2)^2}{2}$.

Similarly, if we consider the case $\mu=-1$, then $\|x-y\|_2^2=\frac{(x_n+\|x'\|_2)^2}{2}$.

Thus, 
\begin{equation}
\begin{array}{rl}
   d(x,Z)^2  & \le \frac{1}{2}\min\{(x_n-\|x'\|_2)^2,(x_n+\|x'\|_2)^2\}  \\
     & \le \frac{1}{2}\sqrt{(x_n-\|x'\|_2)^2(x_n+\|x'\|_2)^2}\\
     &=\frac{1}{2}|x_n^2-\|x'\|_2^2|=\frac{|g(x)|}{2}\,,
\end{array}
\end{equation}
yielding \eqref{eq:Loja.ball}.
\end{proof}

A real-valued function $f : U \to \R$ for some $U\subset \R^n$ is called $L$-Lipschitz (or Lipschitz) continuous on $K\subset U$ if there exits a real $L>0$ such that
$|f(x)-f(y)| \le L\|x-y\|_2$, for all $x,y\in K$.
In this case, $L$ is called the Lipschitz constant of $f$ on $K$.
Given an open set $U\subset \R^n$, a function $f : U \to \R$  is called locally Lipschitz continuous on $K\subset U$ if for every $x\in K$ there exists a neighborhood $W\subset U$ of $x$ such that  $f$ is Lipschitz continuous on ${W\cap K}$.

The following lemma is similar in spirit to \cite[Section 2.4, Lemma 2]{perko2013differential}:
\begin{lemma}
\label{lem:local.Lipschitz}
Given an open set $U\subset \R^n$, if the function $f:U \to \R$ is locally Lipschitz on a compact set $K\subset U$, then $f$ is Lipschitz on $K$.
\end{lemma}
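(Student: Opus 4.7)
The plan is a standard compactness argument that combines a Lebesgue-number-type lemma with the boundedness of $f$ on $K$. I will not need anything beyond the hypotheses, the openness of $U$, the compactness of $K$, and the fact that locally Lipschitz implies continuous.

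First, I would use the local Lipschitz hypothesis to extract a finite cover. For each $x \in K$, the hypothesis furnishes an open neighborhood $W_x \subset U$ of $x$ on which $f$ is Lipschitz on $W_x \cap K$; shrinking $W_x$ to an open ball, we get $r_x > 0$ and $L_x > 0$ such that $B^\circ(x, r_x) \subset U$ and $f$ is $L_x$-Lipschitz on $B^\circ(x, r_x) \cap K$. Because $K$ is compact, the open cover $\{B^\circ(x, r_x/2)\}_{x \in K}$ admits a finite subcover corresponding to points $x_1, \dots, x_N \in K$ with associated radii $r_1, \dots, r_N$ and Lipschitz constants $L_1, \dots, L_N$. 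Set $\delta := \min_{i \in [N]} r_i / 2 > 0$ and $L_0 := \max_{i \in [N]} L_i$.

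Next I would establish the key dichotomy for any pair $x, y \in K$. If $\|x - y\|_2 \le \delta$, then $x$ lies in some $B^\circ(x_i, r_i/2)$; by the triangle inequality $\|y - x_i\|_2 \le \|x - x_i\|_2 + \|x - y\|_2 < r_i/2 + \delta \le r_i$, so $y \in B^\circ(x_i, r_i)$ as well. Both points therefore lie in $B^\circ(x_i, r_i) \cap K$, and the local bound yields $|f(x) - f(y)| \le L_i \|x - y\|_2 \le L_0 \|x - y\|_2$. (This is the mild technical point that forced me to halve the radii in the subcover.) If instead $\|x - y\|_2 > \delta$, I would invoke that $f$ is locally Lipschitz and hence continuous on $K$; since $K$ is compact, $M := \sup_{x \in K} |f(x)|$ is finite, and then $|f(x) - f(y)| \le 2M \le (2M/\delta) \|x - y\|_2$.

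Combining the two cases, the constant $L := \max(L_0, 2M/\delta)$ serves as a global Lipschitz constant of $f$ on $K$. There is no real obstacle: the only care required is in the halving trick above, which ensures the Lebesgue-number-type property used to fit both $x$ and $y$ inside a common ball from the subcover. Every other step is a routine consequence of compactness and of the definition of local Lipschitz continuity.
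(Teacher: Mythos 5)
Your proof is correct and follows essentially the same route as the paper's: cover $K$ by half-radius balls on which $f$ is locally Lipschitz, extract a finite subcover, bound $f$ uniformly on $K$, and split into the two cases $\|x-y\|_2\le\delta$ (use a local Lipschitz constant via the halved-radius trick) and $\|x-y\|_2>\delta$ (use the uniform bound $2M/\delta$). The only differences are notational.
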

\begin{proof}
Since $f$ is locally Lipschitz on $K$, for each $x\in K$ there is some $r_x>0$ and $L_x>0$ such that $B(x,r_x)\subset U$ and $f$ is $L_x$-Lipschitz on $B(x,r_x)\cap K$.
Then the sets $B(x,\frac{1}2r_x)$, $x\in K$ form an open cover of $K$.
Due to the compactness of $K$, there exists a finite subsequence of $B(x,\frac{1}2r_x)$, $x\in K$ covering $K$. 
For convenience, denote these by $B(x_k,\frac{1}2r_k)$ and $L_{k}:=L_{x_k}$, $k\in[l]$.
Let $M:=\sup_{x\in K}|f(x)|$,  $r:=\frac{1}2 \min_{k\in[l]} r_k$, $L_0:=\frac{2M}{r}$ and $L:=\max\{L_0,L_k\,:\,k\in[l]\}$. Then $L$ is a Lipschitz constant of $f$ on $K$.
To see this, pick $x,y\in K$. 
If $\|x-y\|_2\ge r$ then we see that $\frac{|f(x)-f(y)|}{\|x-y\|_2}\le \frac{2M}{r}=L_0\le L$. 
If $\|x-y\|_2<r$, then for some $x_k$ we have $x\in B(x_k,\frac{1}2r_k)$. 
Then $y\in B(x_k,r_k)$ and so $|f(x)-f(y)|\le L_k\|x-y\|_2\le L\|x-y\|_2$.
\end{proof}

\begin{lemma}\label{lem:Kirszbraun}
(Kirszbraun's theorem \cite{kirszbraun1934zusammenziehende})
If $U$ is a subset of $\R^n$ and $f : U \to \R$
is a Lipschitz continuous function, then there is a Lipschitz continuous function
$F: \R^n \to \R$
that extends $f$ and has the same Lipschitz constant as $f$.
Moreover the extension is provided by 
\begin{equation}
\begin{array}{l}
F(x):=\inf _{u\in U}\{f(u)+L_f \|x-u\|_2\}\,,
\end{array}
\end{equation}
where $L_f$ is the Lipschitz constant of $f$ on $U$.
\end{lemma}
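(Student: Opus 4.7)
The plan is to verify that the explicit formula $F(x):=\inf_{u\in U}\{f(u)+L_f\|x-u\|_2\}$ is well-defined, that $F|_U=f$, and that $F$ is $L_f$-Lipschitz on all of $\R^n$. First I would check that the infimum is finite for every $x\in\R^n$. Fix any reference point $u_0\in U$. For any $u\in U$, the Lipschitz property of $f$ on $U$ gives $f(u)\ge f(u_0)-L_f\|u-u_0\|_2$, and the triangle inequality yields $\|u-u_0\|_2\le\|x-u\|_2+\|x-u_0\|_2$. Combining these two bounds shows
\begin{equation*}
f(u)+L_f\|x-u\|_2\ \ge\ f(u_0)-L_f\|x-u_0\|_2,
\end{equation*}
so the infimum is bounded below and $F(x)\in\R$.

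Next, I would show $F$ extends $f$. For $x\in U$, choosing $u=x$ in the infimum gives the easy bound $F(x)\le f(x)$. For the reverse direction, for every $u\in U$ the Lipschitz inequality gives $f(x)-f(u)\le L_f\|x-u\|_2$, i.e.\ $f(u)+L_f\|x-u\|_2\ge f(x)$; taking the infimum over $u\in U$ produces $F(x)\ge f(x)$.

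Finally, to establish the Lipschitz estimate for $F$, pick arbitrary $x,y\in\R^n$. For every $u\in U$, the triangle inequality $\|x-u\|_2\le\|x-y\|_2+\|y-u\|_2$ gives
\begin{equation*}
f(u)+L_f\|x-u\|_2\ \le\ \bigl(f(u)+L_f\|y-u\|_2\bigr)+L_f\|x-y\|_2.
\end{equation*}
Taking the infimum over $u\in U$ on both sides yields $F(x)\le F(y)+L_f\|x-y\|_2$. Interchanging $x$ and $y$ gives the reverse inequality, so $|F(x)-F(y)|\le L_f\|x-y\|_2$, and we conclude that $L_f$ is indeed a Lipschitz constant of the extension.

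I do not expect any serious obstacle in this proof: the formula is the classical McShane construction, which works cleanly precisely because $f$ is real-valued, so one can use the pointwise infimum of the natural cone majorants $u\mapsto f(u)+L_f\|\cdot-u\|_2$. The only subtlety worth flagging is that the statement is labeled Kirszbraun's theorem, whose general vector-valued form is substantially harder (requiring Zorn's lemma or a Helly-type intersection argument); for the scalar target considered here, the elementary McShane argument above suffices.
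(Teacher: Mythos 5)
Your proof is correct. The paper does not supply a proof of this lemma at all — it is stated as a known result with a citation to Kirszbraun's 1934 paper — so there is no argument in the paper to compare against. Your verification (finiteness of the infimum via a reference point $u_0$, agreement with $f$ on $U$ by combining the trivial upper bound with the Lipschitz lower bound, and the $L_f$-Lipschitz estimate via the triangle inequality followed by symmetrizing in $x,y$) is the standard McShane extension argument and is complete. Your closing remark is also well-taken: the explicit infimal-convolution formula given in the lemma is really McShane's scalar extension theorem, which is elementary; the attribution to Kirszbraun is a common conflation, since Kirszbraun's genuine contribution is the much harder Hilbert-space-valued case (targets $\R^m$, $m>1$), which has no such closed-form extension and requires a Helly/Zorn argument. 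For the scalar case used in this paper, your proof is exactly the right level of generality.
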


We recall basic properties of the multivariate Bernstein polynomials described, e.g., in \cite{hildebrandt1933linear,heitzinger2002simulation}.
\begin{definition}
(Multivariate Bernstein polynomials)   Let $d\in\mathbb{N}^n$ and $f \in C([0, 1]^n)$.
The polynomials
\begin{equation}
\begin{array}{l}
B_{f,d}(x):=
\sum_{k_1=0}^{d_1} \dots \sum_{k_n=0}^{d_n}  f\left(\frac{k_1}{d_1},\dots,\frac{k_n}{d_n}\right)
\prod_{j=1}^n \left[\binom{d_j}{k_j} x_j^{k_j} (1-x_j)^{d_j-k_j} \right]
\end{array}
\end{equation}
are called the multivariate Bernstein polynomials of $f$.
\end{definition} 
Note that $ \deg(B_{f,d})=\sum_{j\in[n]}d_j$ and the binomial identity implies
\begin{equation}\label{eq:bound.sup.Bernstein}
    \begin{array}{l}
        \sup_{x\in[0,1]^n} |B_{f,d}(x)|\le \sup_{x\in[0,1]^n} |f(x)|\,.
    \end{array}
\end{equation}
\if{
\hoang{
To prove \eqref{eq:bound.sup.Bernstein}, we apply the multinomial identity to get
\begin{equation}
   1= [x_j+(1-x_j)]^{d_j}=\sum _{k=0}^{d_j}\binom{d_j}{ k}x_j^{d_j-k}(1-x_j)^{k}\,.
\end{equation}
}
}\fi
\if{
\begin{lemma}\label{lem:A.7}
For all $ a,b\in\mathbb{R}^n$ and all $ c\in\mathbb{R}_+^n$ the inequality
\begin{equation}
    \displaystyle \left( \sum a_k b_k c_k \right)^2 \le
\left( \sum a_k^2 c_k \right) \left( \sum b_k^2 c_k \right)
\end{equation}
holds.
\end{lemma}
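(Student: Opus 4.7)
The plan is to recognize this as the weighted Cauchy--Schwarz inequality, which can be reduced to the standard Cauchy--Schwarz inequality on $\R^n$ in one line. Since every $c_k\ge 0$, the real numbers $\sqrt{c_k}$ are well defined. I would set $u_k:=a_k\sqrt{c_k}$ and $v_k:=b_k\sqrt{c_k}$ for $k\in[n]$, and apply the ordinary Cauchy--Schwarz inequality to the vectors $u=(u_1,\ldots,u_n)$ and $v=(v_1,\ldots,v_n)$, obtaining
\begin{equation*}
\left(\sum_{k=1}^n u_k v_k\right)^{2}\le \left(\sum_{k=1}^n u_k^{2}\right)\left(\sum_{k=1}^n v_k^{2}\right).
\end{equation*}
Substituting back $u_k v_k=a_k b_k c_k$, $u_k^2=a_k^2 c_k$ and $v_k^2=b_k^2 c_k$ yields exactly the desired inequality.

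As an alternative self-contained route (in case the reader would prefer not to cite unweighted Cauchy--Schwarz), I would use the discriminant argument. Consider, for $t\in\R$, the polynomial
\begin{equation*}
P(t):=\sum_{k=1}^n (a_k t+b_k)^{2} c_k = \Bigl(\sum_{k=1}^n a_k^{2} c_k\Bigr)t^{2} + 2\Bigl(\sum_{k=1}^n a_k b_k c_k\Bigr)t + \sum_{k=1}^n b_k^{2} c_k.
\end{equation*}
Each summand is nonnegative because $c_k\ge 0$, so $P(t)\ge 0$ for every $t\in\R$. Hence the discriminant of the quadratic $P$ is nonpositive, i.e.
\begin{equation*}
4\Bigl(\sum_{k=1}^n a_k b_k c_k\Bigr)^{2}-4\Bigl(\sum_{k=1}^n a_k^{2} c_k\Bigr)\Bigl(\sum_{k=1}^n b_k^{2} c_k\Bigr)\le 0,
\end{equation*}
which is the claim. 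A mild degenerate case is $\sum_k a_k^2 c_k=0$, but then $a_k\sqrt{c_k}=0$ for every $k$, so the left-hand side of the inequality vanishes and there is nothing to prove.

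There is no real obstacle here: this is a classical inequality, and the only thing to be careful about is the sign assumption $c_k\ge 0$, which is exactly what legitimises either taking square roots or asserting pointwise nonnegativity of $P(t)$. I would therefore expect the author's proof to consist of essentially one of the two lines above.
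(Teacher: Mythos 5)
Your proof is correct; this is indeed the classical weighted Cauchy--Schwarz inequality, and both of your routes (the substitution $u_k=a_k\sqrt{c_k}$, $v_k=b_k\sqrt{c_k}$ reducing to unweighted Cauchy--Schwarz, and the discriminant argument via nonnegativity of $P(t)=\sum_k (a_kt+b_k)^2c_k$) are valid and standard. The paper itself gives no proof of this lemma --- in fact the statement appears only inside a commented-out block and is never used or proved in the final text --- so there is nothing to compare against; either of your one-liners would serve if a proof were needed.
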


\begin{lemma}\label{lem:B.4}   For all $ x\in\mathbb{R}$
$$\sum_{k=0}^{n} (k-nx)^2 \binom{n}{k} x^k (1-x)^{n-k} = nx(1-x).
$$
For all $ x\in[0,1]$ we have $ x(1-x)\le 1/4$ and hence
$$\displaystyle 0 \le \sum_{k=0}^{n} (k-nx)^2 \binom{n}{k} x^k (1-x)^{n-k} \le \frac{n}{4}.
$$
\end{lemma}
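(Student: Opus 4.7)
The claim is the variance formula for a binomial random variable, viewed as a polynomial identity in $x \in \R$. My approach has three steps: compute the three relevant binomial moments via a generating function, expand $(k-nx)^2$ and assemble the target sum, then handle the inequality via non-negativity of the summands and the elementary bound $x(1-x)\le 1/4$.

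The key tool would be the generating function $f(y) := (xy + 1 - x)^n$, whose binomial expansion gives $f(y) = \sum_{k=0}^n \binom{n}{k} x^k y^k (1-x)^{n-k}$. Evaluating $f$, $f'$, and $f''$ at $y = 1$ yields $f(1) = 1$, $f'(1) = nx$, and $f''(1) = n(n-1)x^2$, which translate directly into the three moment identities
$\sum_k \binom{n}{k} x^k (1-x)^{n-k} = 1$,
$\sum_k k \binom{n}{k} x^k (1-x)^{n-k} = nx$,
and $\sum_k k(k-1)\binom{n}{k} x^k (1-x)^{n-k} = n(n-1)x^2$. These are polynomial identities in $x$, so they hold on all of $\R$, not merely on $[0,1]$.

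With the three moments in hand, I would expand $(k - nx)^2 = k^2 - 2nx\,k + n^2 x^2$, so that
\[
\sum_{k=0}^n (k - nx)^2 \binom{n}{k} x^k (1-x)^{n-k} = \bigl(n(n-1)x^2 + nx\bigr) - 2nx\cdot nx + n^2 x^2 = nx(1-x),
\]
where the $k^2$ moment is obtained by adding the $k$ and $k(k-1)$ moments. For the inequality on $x \in [0,1]$, each summand on the left is a product of non-negative quantities (since $x,\,1-x \ge 0$ and $(k-nx)^2 \ge 0$), giving the lower bound $0$; and $x(1-x) \le 1/4$ on $[0,1]$ by elementary calculus or AM-GM, giving the upper bound $n/4$. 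The whole argument is essentially routine substitution; there is no genuine obstacle. The only conceptual point worth mentioning is that the first identity is asserted for all $x \in \R$, which follows because the generating function derivation produces a polynomial identity in $x$.
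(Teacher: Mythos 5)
Your proof is correct. A small remark: this lemma is actually disabled in the paper's source (it sits inside an \verb|\if{ ... }\fi| block), so the paper states it but does not prove it; it is the standard variance identity for the Bernstein basis, quoted as background material. Your generating-function derivation via $f(y)=(xy+1-x)^n$ and its derivatives at $y=1$ is the textbook approach and is carried out correctly: the three moment identities are polynomial identities in $x$ (so valid on all of $\R$), the expansion $(k-nx)^2=k(k-1)+k-2nxk+n^2x^2$ collapses cleanly to $nx(1-x)$, and on $[0,1]$ the nonnegativity of each summand together with $x(1-x)\le 1/4$ gives the bounds $0$ and $n/4$. Nothing is missing.
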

}\fi
\begin{lemma}\label{lem:error.Lipschitz}
(Error bound \cite[Theorem 7.12]{heitzinger2002simulation})   If $ f \in C([0,1]^n)$ is $L$-Lipschitz, namely
$|f(x)-f(y)| \le  L \Vert x-y\Vert _2
$
on $[0,1]^n$, then for all $d\in\N^n$, the inequality
\begin{equation}
\begin{array}{l}
|B_{f,d}(x) - f(x) | \le
\frac{L}{2} ( \sum_{j=1}^n \frac{1}{d_j} )^{\frac{1}2}
\end{array}
\end{equation}
holds for all $x\in[0,1]^n$.
\end{lemma}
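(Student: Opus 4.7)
The plan is to view the Bernstein weights
\[
w_{k,d}(x) := \prod_{j=1}^n \binom{d_j}{k_j} x_j^{k_j}(1-x_j)^{d_j-k_j}, \qquad k=(k_1,\dots,k_n),\ 0\le k_j\le d_j,
\]
as a product of univariate binomial probability masses, and then carry out the standard probabilistic error estimate in the spirit of Bernstein's original proof. The binomial theorem applied factor by factor gives $\sum_k w_{k,d}(x)=1$, so I would first rewrite
\[
B_{f,d}(x)-f(x)=\sum_k \Bigl[f(k_1/d_1,\dots,k_n/d_n)-f(x)\Bigr]\,w_{k,d}(x).
\]

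Next I would apply the $L$-Lipschitz hypothesis to bound each bracket by $L\,\bigl\|(k_1/d_1,\dots,k_n/d_n)-x\bigr\|_2$, and then use the Cauchy–Schwarz inequality relative to the probability weights $w_{k,d}(x)$ (this is the multivariate version of Lemma A.7, which the paper already has in mind):
\[
\sum_k \bigl\|k/d-x\bigr\|_2\,w_{k,d}(x) \le \left(\sum_k \bigl\|k/d-x\bigr\|_2^{\,2}\,w_{k,d}(x)\right)^{1/2}.
\]
Thus the problem reduces to estimating the ``variance'' $\sum_k \|k/d-x\|_2^{\,2}\,w_{k,d}(x)$.

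The key computation is that, because $w_{k,d}(x)$ factorizes across coordinates, the cross terms vanish and
\[
\sum_k \bigl\|k/d-x\bigr\|_2^{\,2}\,w_{k,d}(x)=\sum_{j=1}^n \frac{1}{d_j^2}\sum_{k_j=0}^{d_j}(k_j-d_j x_j)^2\binom{d_j}{k_j}x_j^{k_j}(1-x_j)^{d_j-k_j}.
\]
The inner sum is the variance of a binomial $\mathrm{Bin}(d_j,x_j)$ distribution and equals $d_j x_j(1-x_j)$; using $x_j(1-x_j)\le 1/4$ on $[0,1]$ (as recorded in the univariate Lemma B.4 cited in the sources) yields
\[
\sum_k \bigl\|k/d-x\bigr\|_2^{\,2}\,w_{k,d}(x) = \sum_{j=1}^n \frac{x_j(1-x_j)}{d_j} \le \sum_{j=1}^n \frac{1}{4d_j}.
\]
Combining this with the Cauchy–Schwarz bound gives $|B_{f,d}(x)-f(x)|\le \tfrac{L}{2}(\sum_{j=1}^n 1/d_j)^{1/2}$, as required.

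There is no real obstacle here; the only mildly delicate point is the factorization argument that kills the cross terms in the variance computation, which relies solely on the product structure of $w_{k,d}(x)$ together with the univariate identity $\sum_{k_j}(k_j-d_j x_j)\binom{d_j}{k_j}x_j^{k_j}(1-x_j)^{d_j-k_j}=0$ (the binomial mean). Everything else is a straightforward application of Cauchy–Schwarz and the Lipschitz hypothesis, matching the classical one-dimensional Bernstein estimate.
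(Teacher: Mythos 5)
Your proof is correct and is the classical one; the paper does not prove this lemma itself but cites it as Theorem~7.12 of Heitzinger's thesis, and the argument you give (rewrite $B_{f,d}(x)-f(x)$ as a probability-weighted sum, bound via the Lipschitz hypothesis, apply Cauchy--Schwarz with respect to the Bernstein weights, then evaluate the resulting ``variance'' sum using the binomial second moment $\sum_{k_j}(k_j - d_j x_j)^2\binom{d_j}{k_j}x_j^{k_j}(1-x_j)^{d_j-k_j}=d_j x_j(1-x_j)$ and $x_j(1-x_j)\le\tfrac14$) matches the two auxiliary lemmas the authors left commented out in the source (their Lemma~A.7 is exactly the weighted Cauchy--Schwarz step, and Lemma~B.4 is exactly the binomial variance identity). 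One small imprecision: you say ``the cross terms vanish'' in the variance computation, but there are no cross terms to kill, since $\|k/d-x\|_2^2=\sum_j (k_j/d_j-x_j)^2$ is already a sum of squares; the only thing the product structure of $w_{k,d}$ is needed for is that, after fixing $j$, the other factors sum to one, reducing the $j$-th term to the univariate binomial variance. With that cosmetic correction the argument is complete.
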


Let $e:=(1,\dots,1)\in\R^n$.
As a consequence of Lemma \ref{lem:error.Lipschitz}, we obtain the following result after a change of coordinates.

\begin{lemma}\label{lem:app.poly}
If $ f \in C([0,1]^n)$ is $L$-Lipschitz, namely 
$|f(x)-f(y)| \le  L \Vert x-y\Vert _2
$
on $[-1,1]^n$, then for all $k\in\N^{\ge 1}$, the inequality
\begin{equation}\label{eq:rate.approx}
    \left|B_{y\mapsto f(2y-e),ke}\left(\frac{x+e}{2}\right) - f(x) \right| \le
{L} \biggl(\frac{n}{k} \biggr)^{\frac{1}2}
\end{equation}
holds for all $x\in [-1,1]^n$.
Moreover, we have
\begin{equation}\label{eq:bound.sup.bernstein}
    \begin{array}{l}
        \sup_{x\in[-1,1]^n} |B_{y\mapsto f(2y-e),ke}\left(\frac{x+e}{2}\right)|\le \sup_{x\in[-1,1]^n} |f(x)|\,.
    \end{array}
\end{equation}

\end{lemma}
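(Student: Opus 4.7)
The plan is to reduce Lemma \ref{lem:app.poly} to Lemma \ref{lem:error.Lipschitz} by the affine change of variables $y = (x+e)/2$ that sends $[-1,1]^n$ bijectively onto $[0,1]^n$. Define $g \in C([0,1]^n)$ by $g(y) := f(2y-e)$. Then for any $y_1,y_2 \in [0,1]^n$, $2y_1-e$ and $2y_2-e$ lie in $[-1,1]^n$, so the $L$-Lipschitz hypothesis on $f$ gives
\begin{equation}
|g(y_1)-g(y_2)| = |f(2y_1-e)-f(2y_2-e)| \le L\,\|2y_1-2y_2\|_2 = 2L\,\|y_1-y_2\|_2 \,,
\end{equation}
so $g$ is $2L$-Lipschitz on $[0,1]^n$.

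Applying Lemma \ref{lem:error.Lipschitz} to $g$ with $d = ke = (k,\dots,k) \in \N^n$ yields, for every $y \in [0,1]^n$,
\begin{equation}
|B_{g,ke}(y) - g(y)| \le \frac{2L}{2}\Bigl(\sum_{j=1}^n \tfrac{1}{k}\Bigr)^{1/2} = L\Bigl(\tfrac{n}{k}\Bigr)^{1/2}\,.
\end{equation}
Substituting $y = (x+e)/2$ for $x \in [-1,1]^n$, and noting that $g((x+e)/2) = f(2\cdot(x+e)/2 - e) = f(x)$, we obtain exactly \eqref{eq:rate.approx}, with $B_{g,ke}$ being $B_{y\mapsto f(2y-e),ke}$ by definition of $g$.

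For the second claim \eqref{eq:bound.sup.bernstein}, apply the general bound \eqref{eq:bound.sup.Bernstein} to $g$:
\begin{equation}
\sup_{y\in[0,1]^n}|B_{g,ke}(y)| \le \sup_{y\in[0,1]^n}|g(y)| = \sup_{y\in[0,1]^n}|f(2y-e)| = \sup_{x\in[-1,1]^n}|f(x)|\,,
\end{equation}
where the last equality uses that $y \mapsto 2y-e$ is a bijection from $[0,1]^n$ onto $[-1,1]^n$. Since the same change of variables $x \mapsto (x+e)/2$ is a bijection from $[-1,1]^n$ onto $[0,1]^n$, we have $\sup_{x\in[-1,1]^n}|B_{g,ke}((x+e)/2)| = \sup_{y\in[0,1]^n}|B_{g,ke}(y)|$, which yields \eqref{eq:bound.sup.bernstein}. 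There is no real obstacle here; the only thing to be careful about is tracking the factor of $2$ that comes out of the change of variables in the Lipschitz constant, which is what produces $L$ rather than $L/2$ in front of $(n/k)^{1/2}$.
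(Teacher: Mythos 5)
Your proof is correct and follows essentially the same route as the paper: introduce $g(y)=f(2y-e)$, note that $g$ is $2L$-Lipschitz on $[0,1]^n$, apply Lemma \ref{lem:error.Lipschitz} with $d=ke$, and substitute back via $y=(x+e)/2$; the sup bound follows from \eqref{eq:bound.sup.Bernstein} under the same change of variables. No gaps.
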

\begin{proof}
Define $g:[0,1]^n\to \R$ by $g(x):=f(2x-e)$.
Let us compute a Lipschitz constant of $g$.
With $x,y\in[0,1]^n$, by the Lipschitz continuity of $f$, we have
\begin{equation}
\begin{array}{rl}
   |g(x)-g(y)|  =&|f(2x-e)-f(2y-e)| \\
   \le  & L\|2x-e-2y+e\|_2\\
   =&2L\|x-y\|_2\,.
\end{array}
\end{equation}
Then $2L$ is a Lipschitz constant of $g$.
Let $k\in\N^{\ge 1}$.
Using Lemma \ref{lem:error.Lipschitz}, we get that for all $x\in[0,1]^n$,
\begin{equation}
\begin{array}{l}
|B_{g,ke}(x) - g(x) | \le\frac{2L}{2} ( \sum_{j=1}^n \frac{1}{k} )^{\frac{1}2}= {L} ( \frac{n}{k} )^{\frac{1}2}\,.
\end{array}
\end{equation}
Let $y\in[-1,1]^n$. Then $\frac{y+e}2\in[0,1]$ implies that
\begin{equation}
\begin{array}{l}
|B_{g,ke}(\frac{y+e}2) - f(y) | =|B_{g,ke}(\frac{y+e}2) - g(\frac{y+e}{2}) | \le {L} ( \frac{n}{k} )^{\frac{1}2}\,.
\end{array}
\end{equation}
yielding \eqref{eq:rate.approx}.

In addition, from \eqref{eq:bound.sup.Bernstein},
\begin{equation}
    \begin{array}{rl}
        \sup_{y\in[-1,1]}|B_{g,ke}(\frac{y+e}{2})|&=\sup_{x\in[0,1]^n} |B_{g,ke}(x)|\\
        & \le \sup_{x\in[0,1]^n} |g(x)|\\
        &=\sup_{y\in[-1,1]}|g(\frac{y+e}{2})|=\sup_{y\in[-1,1]}|f(y)|\,,
    \end{array}
\end{equation}
which yields \eqref{eq:bound.sup.bernstein}.
\end{proof}

For each $h\in\R[x]$, let
\begin{equation}
    \Theta(h):=\frac{\sup_{x \in  \mathbb S^{n-1}}  h(x) }{\inf_{x \in  \mathbb S^{n-1}}  h(x) }\,.
\end{equation}
%
For later use recall the following theorem.
\begin{lemma}(Reznick \cite[Theorem 3.12]{reznick1995uniform})\label{lem:homogeneous.sos}
Suppose that $p \in \R[ x]$ is a positive definite form of degree $2d$, for some $d\in\N$. 
Then for all $k \in \N$ satisfying
\begin{equation}
    k\ge \frac{{2nd(2d - 1)}}{{4\log 2}}\Theta (p) - \frac{{n + 2d}}{2}\,,
\end{equation}
${\| x \|^{2k}_2}p$ is a homogeneous SOS polynomial of degree $2(k + d)$.
\end{lemma}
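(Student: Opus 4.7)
The statement is the content of \cite[Theorem 3.12]{reznick1995uniform}, so the plan is to invoke Reznick's argument verbatim rather than re-prove it from scratch. For transparency, let me sketch the main ideas and indicate where the explicit constants come from.

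The proof is based on Hilbert's identity and the apolar (Fischer) inner product on the space of forms of a fixed degree. The first ingredient is the integral formula
\begin{equation*}
\|x\|_2^{2m} = c_{n,m} \int_{\mathbb{S}^{n-1}} \langle u, x\rangle^{2m}\, d\sigma(u)\,,
\end{equation*}
with an explicit positive constant $c_{n,m}$, which already exhibits $\|x\|_2^{2m}$ as a continuous positive combination of perfect squares $\langle u,x\rangle^{2m}$. The goal is therefore to write $\|x\|_2^{2k}\,p$ as a positive combination of forms $\langle u,x\rangle^{2(k+d)}$, each of which is trivially SOS.

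The strategy is to regularize $p$ via a heat-semigroup-type smoothing operator, acting on homogeneous forms through the apolar inner product. Since $p$ is a positive definite form, the function $p/\|x\|_2^{2d}$ is $0$-homogeneous with values in $[p_{\min}, p_{\max}]$, where $p_{\min}>0$ and the ratio $p_{\max}/p_{\min} = \Theta(p)$. Multiplication by $\|x\|_2^{2k}$ plays the role of a smoothing kernel on the sphere: for $k$ sufficiently large, the convolution of $p$ against this kernel becomes uniformly close to a \emph{strictly positive} combination of powers $\langle u,x\rangle^{2(k+d)}$, and therefore produces an SOS representation of $\|x\|_2^{2k}\,p$.

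The quantitative heart of the argument, and the main obstacle, is the sharp estimate on the rate at which this smoothing converges to the identity on the sphere. One tracks how the heat regularization reduces oscillations of $p$ on $\mathbb{S}^{n-1}$ in the sup norm, and observes that the rate depends linearly on $\Theta(p)$, quadratically on the degree $2d$, and linearly on the dimension $n$. A careful analysis of the involved binomial coefficients (governed by Stirling-type asymptotics, which is where the factor $1/(4\log 2)$ arises) then gives precisely the bound $k \geq \frac{2nd(2d-1)}{4\log 2}\Theta(p) - \frac{n+2d}{2}$. Rather than redo this delicate calculation, we simply apply \cite[Theorem 3.12]{reznick1995uniform}.
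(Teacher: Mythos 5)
Your proposal is correct and matches the paper's approach exactly: the lemma is stated as a cited black box (Reznick, Theorem 3.12) and the paper provides no proof of its own, so invoking the reference is precisely what is done here. Your accompanying sketch of Reznick's argument (Hilbert's identity, the apolar inner product, the smoothing operator on the sphere, and the Stirling/binomial analysis behind the $1/(4\log 2)$ constant) is a reasonable summary and does not change the fact that both you and the paper ultimately rely on the same citation.
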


\subsection{The proof of Theorem \ref{theo:complex.putinar.vasilescu}}
\label{proof:complex.putinar.vasilesc}
Recall that $[l]:=\{1,\dots,l\}$ for $l\in\N^{\ge 1}$.
Given real value functions $p,q$, we use the notation $\{p * q\}=\{x\in\R^n\,:\, p(x) * q(x)\}$, where $* \in \{  =, \ge, \le, >, < \}$.
Given a real value function $p$ on $\Omega\subset\R^n$, note $\|p\|_{\Omega}:=\sup_{x\in \Omega}|p(x)|$.
With $\Omega\subset \R^n$, denote by $\text{int}(\Omega)$ the interior of $\Omega$.

Given $U,V \subseteq \R^n$ and $r \in \R$, note $U+V=\{u+v\,:\,u\in U\,,\,v\in V\}$ and $rU=\{ru\,:\,u\in U\}$.
Given a function $f:U\to\R$ and $A\subset U\subset \R^n$ such that $A=-A$, $f$ is called even on $A$ if $f(-x)=f(x)$ for all $x\in A$.
Denote by $\mathbb{S}^{n-1}$  the unit sphere of $\R^n$. 

To begin the proof, let us fix $\varepsilon>0$.
By assumption, $\deg(f)=2\df$, $\deg(g_j)=2d_{g_j}$ for some $d,d_{g_j}\in\N$,  for $j\in [m]$.
\subsubsection{Construction of the positive weight functions}
\label{sec:construct}
For $j\in[m]$, define 
\begin{equation}
    S_j:=\{x\in\R^n\,:\,g_i(x)\ge 0\,,\,i\in[j]\}\,.
\end{equation}
Obviously, we have $S_m=S$. Note $S_0:=\R^n$ and $f_m:=f$.

We will prove that there exist functions  $\bar\varphi_{m}:\R^n\to\R$ such that the following conditions hold:
\begin{enumerate}
    \item $\bar\varphi_{m}$ is positive, even and bounded from above by  $C_{\bar\varphi_{m}}=\bar r_m\varepsilon^{-r_m}$  on $ B(0,\sqrt{n}+m)$ for some positive constants $\bar r_m$ and $r_m$ independent of $\varepsilon$.
    \item $\bar\varphi_{m}$ is Lipschitz with Lipschitz constant $L_{\bar\varphi_{m}}=\bar t_j\varepsilon^{-t_m}$ for some positive constants $\bar t_m$ and $t_m$ independent of $\varepsilon$.
    \item 
       $f_{m-1}:=f_{m}+\frac{\varepsilon}{2}-\bar\varphi_{m}^2g_{m}$ satisfies:
       \begin{enumerate}
           \item $f_{m-1}\ge 0$ on $S_{m-1}\cap  B(0,\sqrt{n}+m-1)$;
           \item  $f_{m-1}\le C_{f_{m-1}}$ on $ B(0,\sqrt{n}+{m})$, where $C_{f_{m-1}}=\bar c_{m-1}\varepsilon^{-c_{m-1}}$  for some positive constants $\bar c_{m-1}$ and $c_{m-1}$ independent of $\varepsilon$;
           \item $f_{m-1}$ is Lipschitz  on $ B(0,\sqrt{n}+{m})$ with Lipschitz constant $L_{f_{m-1}}=\bar l_{m-1}\varepsilon^{-l_{m-1}}$ for some positive constants $\bar l_{m-1}$ and $l_{m-1}$ independent of $\varepsilon$.
       \end{enumerate}
\end{enumerate}

Let
\begin{equation}\label{eq:constant.M}
    M_m:=\inf_{x\in S_{m}\cap  B(0,\sqrt{n}+m)} \frac{f(x)+\frac{\varepsilon}{2}}{g_m(x)}\,.
\end{equation}
\paragraph{The constant $M_m$ is a positive real number.} 
Let $C_{g_m}=\|g_m\|_{ B(0,\sqrt{n}+m)}$.
We claim that $\frac{\varepsilon}{2 C_{g_m}}<M_m<\infty$. 
Indeed, if $z$ is a feasible solution of \eqref{eq:constant.M}, $z\in S$ yielding $f(z)\ge 0$ so that
\begin{equation}
    \frac{f(z)+\frac{\varepsilon}{2}}{g_m(z)} \ge \frac{\varepsilon}{2g_m(z)}\ge \frac{\varepsilon}{2C_{g_m}}\,.
\end{equation}
From this, we have $M_m>\frac{\varepsilon}{2C_{g_m}}$.
On the other hand,
there exists $a\in\R^n$ such that $g_j(a)>0$ for $j\in[m]$ since $S$ has nonempty interior. 
For $j\in[m]$, since $g_j$ is homogeneous, $a=0$ yields   $g_j(a)=0$.
It implies that $a\ne 0$.
With $\bar a=\frac{a}{\|a\|_2}\in B(0,1) \subset B(0,\sqrt{n}+m)$, we obtain $g_j(\bar a)>0$ for $j\in[m]$ since 
\begin{equation}
    g_j(\bar a)=g_j\left(\frac{a}{\|a\|_2}\right)=\frac{g_j( a)}{\|a\|_2^{2d_{ g_j}}}>0\,,\,\forall j\in[m]\,.
\end{equation}
Thus, $\bar a$ is a feasible solution of \eqref{eq:constant.M} which yields 
\begin{equation}\label{eq:bound.M}
    \frac{\varepsilon}{2C_{g_m}}\le M_m\le \frac{f(\bar a)+\frac{\varepsilon}{2}}{g_m(\bar a)}\le\frac{C_f+\frac{\varepsilon}{2}}{g_m(\bar a)}< \infty\,,
\end{equation}
where $C_f:=\|f\|_{ B(0,\sqrt{n}+m)}$.

Let $\psi_m: \R^n\to \R$ be the function defined by 
\begin{equation}
    \psi_m(x):=\begin{cases}
    \max\{M_m,\frac{f(x)+\frac{\varepsilon}{2}}{g_m(x)}\}&\text{if }g_m(x)<0\,,\\
    M_m&\text{otherwise}.
    \end{cases}
\end{equation}
\paragraph{The function $f+\frac{\varepsilon}{2} -\psi_mg_m$ is nonnegative on $S_{m-1}\cap  B(0,\sqrt{n}+m)$.} Namely, we claim that
\begin{equation}\label{eq:firstterm}
    f+\frac{\varepsilon}{2}-\psi_mg_m\ge 0\text{ on }S_{m-1}\cap  B(0,\sqrt{n}+m)\,.
\end{equation}
Let $y\in S_{m-1}\cap  B(0,\sqrt{n}+m)$.
If $g_m(y)< 0$, then
\begin{equation}
\begin{array}{rl}
    f(y)+\frac{\varepsilon}{2}-\psi_m(y)g_m(y)= & f(y)+\frac{\varepsilon}{2}-g_m(y) \max\{M_m,\frac{f(y)+\frac{\varepsilon}{2}}{g_m(y)}\} \\
     \ge  & f(y)+\frac{\varepsilon}{2}-g_m(y) \frac{f(y)+\frac{\varepsilon}{2}}{g_m(y)}=  0\,.
\end{array}
\end{equation}
Otherwise, $g_m(y)\ge 0$ gives
\begin{equation}
\begin{array}{rl}
    f(y)+\frac{\varepsilon}{2}-\psi_m(y)g_m(y)&=  f(y)+\frac{\varepsilon}{2}-g_m(y) M_m \\
       & \begin{cases} \ge f(y)+\frac{\varepsilon}{2}-g_m(y) \frac{f(y)+\frac{\varepsilon}{2}}{g_m(y)}= 0 & \text{if } g_m(y)>0\,, \\
     =f(y)+\frac{\varepsilon}{2}\ge  0 & \text{if } g_m(y)=0\,,
     \end{cases}
\end{array}
\end{equation}
since $y\in S$ is a feasible solution of \eqref{eq:constant.M}.
\paragraph{The function $\psi_m$ is positive, even on $ B(0,\sqrt{n}+m)$ and continuous on $S_{m-1}\cap  B(0,\sqrt{n}+m)$.}
It is easy to see that $\psi_m$ is bounded from below by $ M_m$ and continuous on $  B(0,\sqrt{n}+m)\backslash \{ g_m=0\}$ since the max function $(t_1,t_2)\mapsto \max\{t_1,t_2\}$ is continuous. 

We claim that $\psi_m$ is continuous on $S_{m-1}\cap  B(0,\sqrt{n}+m)\cap \{ g_m =0\}$. 
Indeed, let us consider a sequence $(y_l)_l \subset S_{m-1}\cap  B(0,\sqrt{n}+m)\cap \{g_m <0\}$ such that $y_l \to \bar y\in S_{m-1}\cap  B(0,\sqrt{n}+m)\cap \{ g_m =0\}$. 
Then $g_m(y_l)\to 0^-$ and $f(y_l)\to f(\bar y)\ge 0$ (since $\bar y\in S$) yielding that $\frac{f(y_l)+\frac{\varepsilon}{2}}{g_m(y_l)}\to -\infty$.
It implies that $\max\{M_m,\frac{f(y_l)+\frac{\varepsilon}{2}}{g_m(y_l)}\}\to M_m$.
Thus, $\psi_m=M_m$ on a sufficiently small neighborhood of any point in $S_{m-1}\cap  B(0,\sqrt{n}+m)\cap \{  g_m =0\}$.
On the other hand, $\psi_m$ is even, i.e., $\psi_m(x)=\psi_m(-x)$ due to the fact that $f,g_1,\dots,g_m$ are even and $ B(0,\sqrt{n}+m)=- B(0,\sqrt{n}+m)$.

\paragraph{The upper bound of $\psi_m$ depends on $\varepsilon$.}
It follows from  \eqref{eq:bound.M} that $\psi_m = M_m$ on $ B(0,\sqrt{n}+m) \cap \{g_m \geq 0\}$ and so is bounded from above by $\frac{f(\bar a)+\frac{\varepsilon}{2}}{g_m(\bar a)}$ .

Let us compute an upper bound of $\psi_m$  on $S_{m-1}\cap B(0,\sqrt{n}+m) \cap \{g_m< 0\}$.
Let $y\in S_{m-1}\cap B(0,\sqrt{n}+m)$ be such that $g_m(y)<0$ and $\frac{f(y)+\frac{\varepsilon}{2}}{g_m(y)}> M_m$.
Then $\psi_m(y)=\frac{f(y)+\frac{\varepsilon}{2}}{g_m(y)}$.
By using the Łojasiewicz inequality (see Lemma \ref{lem:lojas.ineq}), there exist $C_m>0$ and $\alpha_m>0$ depending on $g_1,\dots,g_m$ such that for all $ x\in S_{m-1}\cap B(0,\sqrt{n}+m)\cap \{g_m< 0\}$,
\begin{equation}\label{eq:lojasiewicz.ineq}
    \dist(x,S)^{\alpha_m}\le -C_m \min\{g_1(x),\dots,g_m(x),0\}= -C_mg_m(x)\,.
\end{equation}
Let $\delta_m=\frac{1}{C_m}(\frac{\varepsilon}{2L_f})^{\alpha_m}$, where $L_f$ is a Lipschitz constant of $f$ on $ B(0,\sqrt{n}+m)$.
Consider the following two cases:
\begin{itemize}
    \item Case 1: $g_m(y)\le -\delta_m< 0$. Then 
    \begin{equation}
        \psi_m(y)=  \frac{f(y)+\frac{\varepsilon}{2}}{g_m(y)}=\frac{-f(y)-\frac{\varepsilon}{2}}{-g_m(y)}\le  \frac{C_f}{-g_m(y)} \le  \frac{C_f}{\delta_m}\le C_mC_f\left(\frac{2L_f}{\varepsilon}\right)^{\alpha_m}  \,.
    \end{equation}
    \item Case 2: $-\delta_m\le g_m(y)< 0$.
    Let $z\in S$ such that $\dist(y,S)=\|y-z\|_2$.
    Then  \eqref{eq:lojasiewicz.ineq} turns to $-f(y)\le \frac{\varepsilon}{2}$ according to
    \begin{equation}
    \begin{array}{rl}
        -f(y) & \le -f(z)+ L_f\|y-z\|_2 \le L_f \dist(y,S) \\
         & \le L_f(-C_mg_m(y))^{\frac{1}{\alpha_m}}\le L_f(C_m\delta_m)^{\frac{1}{\alpha_m}}=\frac{\varepsilon}{2}\,.
    \end{array}
    \end{equation}
    From this, we obtain
        \begin{equation}
        M_m<  \frac{f(y)+\frac{\varepsilon}{2} }{g_m(y)}=\frac{-f(y)-\frac{\varepsilon}{2} }{-g_m(y)} \le\frac{\frac{\varepsilon}{2}-\frac{\varepsilon}{2}}{-g_m(y)} =0< M_m\,.
    \end{equation}
    The contradiction indicates that this case does not occur.
\end{itemize}
Thus, the bound is given as follows
\begin{equation}\label{eq:bound.psi}
    \sup_{x\in S_{m-1}\cap  B(0,\sqrt{n}+m)}\psi_m(x)\le \max\left\{ \frac{f(\bar a)+\frac{\varepsilon}{2}}{g_m(\bar a)}, C_mC_f\left(\frac{2L_f}{\varepsilon}\right)^{\alpha_m}  \right\}=:C_{\psi_m}\,.
\end{equation}
Moreover, we obtain the inclusion
\begin{equation}\label{eq:inclusion.domain}
    S_{m-1}\cap B(0,\sqrt{n}+m) \cap \{\xi_m \ge M_m\}\cap \{g_m\le 0\} \subset \{g_m\le -\delta_m\}\,,
\end{equation}
where $\xi_m(x)= \frac{f(x)+\frac{\varepsilon}{2}}{g_m(x)}$.
Let $\varphi_m$ be the square root of $\psi_m$, i.e., $\varphi_m(x):=\sqrt{\psi_m(x)}$.
Then $\varphi_m$ is well-defined on $ B(0,\sqrt{n}+m)$ since $\psi_m$ is positive.
Moreover, $\varphi_m$ is finitely bounded from above on $S_{m-1}\cap  B(0,\sqrt{n}+m)$ by $C_{\varphi_m}:=\sqrt{C_{\psi_m}}$ and $\varphi_m$ is continuous on $S_{m-1}\cap  B(0,\sqrt{n}+m)$ since $\xi_m$ is continuous on $S_{m-1}\cap  B(0,\sqrt{n}+m)$.

\paragraph{The function $\varphi_m$ is Lipschitz continuous on $S_{m-1}\cap B(0,\sqrt{n}+m-1)$.}
Keep in mind that $\psi_m$ is defined by the constant function $M_m$ and the function $\xi_m$.
Since $\varphi_m$ takes the constant value $\sqrt{M_m}$ on $ B(0,\sqrt{n}+m) \backslash(\{\xi_m \ge M_m\}\cap \{g_m\le 0\}) $, $\varphi_m$ is Lipschitz continuous on $ B(0,\sqrt{n}+m) \backslash(\{\varphi_m \ge M_m\}\cap \{g_m\le 0\}) $ with zero Lipschitz constant.

On the other hand, $\varphi_m= \sqrt{\xi_m}$ on $B(0,\sqrt{n}+m) \cap \{\xi_m \ge M_m\}\cap \{g_m\le 0\}$.
As a consequence of \eqref{eq:inclusion.domain}, we have
\begin{equation}\label{eq:2seteqal}
\begin{array}{rl}
     & S_{m-1}\cap B(0,\sqrt{n}+m) \cap \{\xi_m \ge M_m\}\cap \{g_m\le 0\} \\
   =  & S_{m-1}\cap B(0,\sqrt{n}+m) \cap \{\xi_m \ge M_m\}\cap \{g_m\le -\delta_m\}\,.
\end{array}
\end{equation}

It implies that 
\begin{equation}\label{eq:def.varphi}
    \varphi_m(x)=\begin{cases}
    \sqrt{\xi_m(x)}&\text{if }x\in S_{m-1}\cap B(0,\sqrt{n}+m) \cap \{\xi_m \ge M_m\}\cap \{g_m\le -\delta_m\}\,,\\
    \sqrt{M_m}&\text{if }x\in (S_{m-1}\cap B(0,\sqrt{n}+m)) \backslash( \{\xi_m \ge M_m\}\cap \{g_m\le -\delta_m\})\,.
    \end{cases}
\end{equation}

The second equality is due to the fact that $\varphi_m=\sqrt{M_m}$ on $(S_{m-1}\cap B(0,\sqrt{n}+m)) \backslash( \{\xi_m \ge M_m\}\cap \{g_m\le 0\})$ and 
\begin{equation}
\begin{array}{rl}
    &(S_{m-1}\cap B(0,\sqrt{n}+m)) \backslash( \{\xi_m \ge M_m\}\cap \{g_m\le 0\})   \\
    = & (S_{m-1}\cap B(0,\sqrt{n}+m))\backslash[S_{m-1}\cap B(0,\sqrt{n}+m) \cap \{\xi_m \ge M_m\}\cap \{g_m\le 0\}]\\
    = & (S_{m-1}\cap B(0,\sqrt{n}+m))\backslash[S_{m-1}\cap B(0,\sqrt{n}+m) \cap \{\xi_m \ge M_m\}\cap \{g_m\le -\delta_m\}]\\
    = &(S_{m-1}\cap B(0,\sqrt{n}+m))\backslash(\{\xi_m \ge M_m\}\cap \{g_m\le -\delta_m\})\,.
\end{array}
\end{equation}

Set 
\begin{equation}
    w_m:=\min\left\{1,\frac{\delta_m}{2L_{g_m}},\frac{\varepsilon{\delta_{m}^2}}{8C_{g_m}[L_{f}C_{g_{m}}+(C_f+\frac{\varepsilon}{2})L_{g_{m}}]}\right\}\,.
\end{equation}
and
\begin{equation}\label{eq:sum.ball}
    W_m:=\left( B(0,\sqrt{n}+m-1) \cap \{\xi_m \ge M_m\}\cap \{g_m\le -\delta_m\}\right)+w_mB(0,1)\,.
\end{equation}
Then $ B(0,\sqrt{n}+m-1) \cap \{\xi_m \ge M_m\}\cap \{g_m\le -\delta_m\}\subset W_m$. 
Next, we prove that
\begin{equation}\label{eq:open.inclusion}
    W_m\subset B(0,\sqrt{n}+m) \cap \{\xi_m \ge \frac{M_m}2\}\cap \{g_m\le -\frac{\delta_m}{2}\}\,.
\end{equation}
Let $y\in W_m$. Then $y=z+w_m u$ for some $z\in S_{m-1}\cap B(0,\sqrt{n}+m-1) \cap \{\xi_m \ge M_m\}\cap \{g_m\le -\delta_m\}$ and for some $u\in B(0,1)$.
Combining $\|z\|_2\le \sqrt{n}+m-1$, $0<w_m< 1$ and $\|u\|_2\le 1$, one has $\|y\|_2\le \|z\|_2+w_m\|u\|_2\le \sqrt{n}+m$, yielding $y\in B(0,\sqrt{n}+m)$.
Since $g_m(z)\le -\delta_m$, we have
\begin{equation}
    g_m(y) \le g_m(z)+L_{g_m}\|y-z\|_2 \le -\delta_m+L_{g_m}w_m\|u\|_2\le -\delta_m+L_{g_m}
    \frac{\delta_m}{2L_{g_m}}\le  -\frac{\delta_m}{2}\,,
\end{equation}
where $L_{g_m}$ is a Lipschitz constant of $g_m$ on $B(0,\sqrt{n}+m)$.
Thus $y\in \{g_m\le -\frac{\delta_m}{2}\}$. 
This in turn implies
\begin{equation}
\begin{array}{rl}
    &{|\xi_{m}(y)-\xi_{m}(z)|}\\
    =& {\left|\frac{f(y)+\frac{\varepsilon}{2}}{g_{m}(y)}-\frac{f(z)+\frac{\varepsilon}{2}}{g_{m}(z)}\right|}\\
    = &\frac{|(f(y)+\frac{\varepsilon}{2})g_{m}(z)-(f(z)+\frac{\varepsilon}{2})g_{m}(y)|}{|g_{m}(y)||g_{m}(z)|}\\
    \le&\frac{2}{\delta_{m}^2}{|(f(y)+\frac{\varepsilon}{2} - f(z)-\frac{\varepsilon}{2})g_{m}(z)+(f(z)+\frac{\varepsilon}{2})(g_{m}(z)-g_{m}(y))|}\\
    \le& \frac{2}{\delta_{m}^2}{[|f(y) - f(z)||g_{m}(z)|+(|f(z)|+\frac{\varepsilon}{2})|g_{m}(z)-g_{m}(y)|]}\\
    \le &\frac{2}{\delta_{m}^2}{[L_{f}\|y - z\|_2C_{g_{m}}+(C_f+\frac{\varepsilon}{2})L_{g_{m}}\|z-y\|_2]}\\
    \le &\frac{2}{\delta_{m}^2}{[L_{f}C_{g_{m}}+(C_f+\frac{\varepsilon}{2})L_{g_{m}}]w_m\|u\|_2}\le \frac{\varepsilon}{4C_{g_m}}\le\frac{M_m}2\,.
\end{array}
\end{equation}
Since $\xi_m(z)\ge M_m$, we obtain $\xi_m(y)\ge \xi_m(z)-|\xi_m(y)-\xi_m(z)|\ge M_m-\frac{M_m}{2}=\frac{M_m}{2}$, yielding $y\in\{\xi_m\ge \frac{M_m}{2}\}$, which concludes the proof of \eqref{eq:open.inclusion} and ensures that $ \sqrt{\xi_m}$ is well-defined on $W_m$.

Let us prove that $\sqrt{\xi_m}$ is  Lipschitz on $W_m$. 
Let $y,z\in W_m$ such that $y\ne z$. Then
\begin{equation}
\begin{array}{rl}
    &\frac{|\sqrt{\xi_{m}(y)}-\sqrt{\xi_{m}(z)}|}{\|y - z\|_2}\\
    =&\frac{|\xi_{m}(y)-\xi_{m}(z)|}{\|y - z\|_2(\sqrt{\xi_{m}(y)}+\sqrt{\xi_{m}(z)})}\\
    \le& \frac{\left|\frac{f(y)+\frac{\varepsilon}{2}}{g_{m}(y)}-\frac{f(z)+\frac{\varepsilon}{2}}{g_{m}(z)}\right|}{2\sqrt{\frac{M_{m}}2}\|y - z\|_2}\\
    \le &\frac{|(f(y)+\frac{\varepsilon}{2})g_{m}(z)-(f_{m}(z)+\frac{\varepsilon}{2})g_{m}(y)|}{2g_{m}(y)g_{m}(z)\sqrt{\frac{\varepsilon}{4C_{g_m}}}\|y - z\|_2}\\
    \le& \frac{2|(f(y)+\frac{\varepsilon}{2})g_{m}(z)-(f_{m}(z)+\frac{\varepsilon}{2})g_{m}(y)|}{\delta_{m}^2\sqrt{\frac{\varepsilon}{4C_{g_m}}}\|y - z\|_2}\\
    =& \frac{2|(f(y)+\frac{\varepsilon}{2} - f(z)-\frac{\varepsilon}{2})g_{m}(z)+(f(z)+\frac{\varepsilon}{2})(g_{m}(z)-g_{m}(y))|}{\delta_{m}^2\sqrt{\frac{\varepsilon}{4C_{g_m}}}\|y - z\|_2}\\
    \le& \frac{2[|f(y) - f(z)||g_{m}(z)|+(|f(z)|+\frac{\varepsilon}{2})|g_{m}(z)-g_{m}(y)|]}{\delta_{m}^2\sqrt{\frac{\varepsilon}{4C_{g_m}}}\|y - z\|_2}\\
    \le& \frac{2[L_{f}\|y - z\|_2C_{g_{m}}+(C_f+\frac{\varepsilon}{2})L_{g_{m}}\|z-y\|_2]}{\delta_{m}^2\sqrt{\frac{\varepsilon}{4C_{g_m}}}\|y - z\|_2}\\
    \le& \frac{2[L_{f}C_{g_{m}}+(C_{f}+\frac{\varepsilon}{2})L_{g_{m}}]}{\delta_{m}^2\sqrt{\frac{\varepsilon}{4C_{g_m}}}}=:L_{\sqrt{\xi_m}}
    \,,
\end{array}
\end{equation}
Thus, $L_{\sqrt{\xi_m}}$ is a Lipschitz constant of $\sqrt{\xi_m}$ on $W_m$.

Set $K:=S_{m-1}\cap B(0,\sqrt{n}+m-1)$, $K_1:=K \cap \{\xi_m \ge M_m\}\cap \{g_m\le -\delta_m\}$ and $K_2:=K \backslash (\{\xi_m \ge M_m\}\cap \{g_m\le -\delta_m\})$.
Note that $K=K_1\cup K_2$ and $K_1\cap K_2=\emptyset$.
From \eqref{eq:def.varphi}, $\varphi_m=\sqrt{\xi_m}$ on $K_1$ and $\varphi_m=\sqrt{M_m}$ on $K_2$.

To conclude that $\varphi_m$ is Lipschitz on  $K$ according to Lemma \ref{lem:local.Lipschitz} (see Figure \ref{fig:Lipschitz}), it is sufficient to prove that $\varphi_m$ is locally Lipschitz on $K$.

Explicitly, we will show that for all $z\in K$, $\varphi_m$ is Lipschitz on $B(z,\frac{w_m}2)\cap K$ with Lipschitz constant $L_{\sqrt{\xi_m}}$.
Let $z\in K$. Let $u,v\in B(z,\frac{w_m}2)\cap K$ and consider the following cases:
\begin{itemize}
    \item Case 1: $u,v\in K_1$. Then $u,v\in W_m$ by definition of $W_m$. Moreover, $\varphi_m(u)=\sqrt{\xi_m(u)}$ and $\varphi_m(v)=\sqrt{\xi_m(v)}$. In this case, by the Lipschitz continuity of $\sqrt{\xi_m}$ on $W_m$,
    \begin{equation}
        |\varphi_m(u)-\varphi_m(v)|=|\sqrt{\xi_m(u)}-\sqrt{\xi_m(v)}|\le L_{\sqrt{\xi_m}}\|u-v\|_2\,.
    \end{equation}
    
    \item Case 2: $u,v\in K_2$.
    In this case, $\varphi_m(u)=\varphi_m(v)=\sqrt{M_m}$, so that
    \begin{equation}
        |\varphi_m(u)-\varphi_m(v)|=0\le L_{\sqrt{\xi_m}}\|u-v\|_2\,.
    \end{equation}
    \item Case 3: $u\in K_1$ and $v\in K_2$. 
    We claim that $B(z,\frac{w_m}2) \subset W_m$. 
    Let $q\in B(z,\frac{w_m}2)$.
    Then
    $\|q-u\|_2\le\|q-z\|_2+\|z-u\|_2\le w_m$ yielding $q\in u+w_mB(0,1)\subset K_1+w_mB(0,1)\subset W_m$.
    Then $u,v\in B(z,\frac{w_m}2) \subset W_m$.
    Moreover, 
     $\varphi_m(u)=\sqrt{\xi_m(u)}$ and $\varphi_m(v)=\sqrt{M_m}$. 
According to the continuity of $\xi_m$ on $B(z,\frac{w_m}2)\subset W_m$ and the convexity of $B(z,\frac{w_m}2)$, there exists $y\in B(z,\frac{w_m}2)\cap \{\xi_m=M_m\}\cap \{tu+(1-t)v\,:\,t\in[0,1]\}$. 
Then with $y=\lambda u+(1-\lambda)v$ for some $\lambda \in[0,1]$, we have
\begin{equation}
\begin{array}{rl}
    |\varphi_m(u)-\varphi_m(v)|  \le&  |\varphi_m(u)-\varphi_m(y)| +|\varphi_m(y)-\varphi_m(v)|\\
      \le &|\sqrt{\xi_m(u)}-\sqrt{\xi_m(y)}|+ |\sqrt{M_m}-\sqrt{M_m}|\\
     \le &L_{\sqrt{\xi_m}}\|u-y\|_2\\
     \le &L_{\sqrt{\xi_m}}\|u-\lambda u-(1-\lambda)v\|_2\\
     \le &L_{\sqrt{\xi_m}}(1-\lambda)\|u-v\|_2\le L_{\sqrt{\xi_m}}\|u-v\|_2\,.
\end{array}
\end{equation}
\end{itemize}


\definecolor{uuuuuu}{rgb}{0.26666666666666666,0.26666666666666666,0.26666666666666666}
\definecolor{ududff}{rgb}{0.30196078431372547,0.30196078431372547,1}
\definecolor{xdxdff}{rgb}{0.49019607843137253,0.49019607843137253,1}

\begin{figure}
\begin{center}
\begin{tikzpicture}[line cap=round,line join=round,>=triangle 45,x=1cm,y=1cm]
\clip(-5.625503484414552,-1.4739416348527954) rectangle (4.670427553230122,6.286739626080435);
\draw [line width=1pt] (-4,0)-- (-4,4);
\draw [line width=1pt] (-4,4)-- (3,4);
\draw [line width=1pt] (3,4)-- (3,0);
\draw [line width=1pt] (-4,0)-- (3,0);
\draw [line width=1pt] (-5,4)-- (-5,0);
\draw [line width=1pt] (-4,5)-- (3,5);
\draw [line width=1pt] (4,4)-- (4,0);
\draw [line width=1pt] (-4,-1)-- (3,-1);
\draw [shift={(-4,4)},line width=1pt]  plot[domain=1.5707963267948966:3.141592653589793,variable=\t]({1*1*cos(\t r)+0*1*sin(\t r)},{0*1*cos(\t r)+1*1*sin(\t r)});
\draw [shift={(-4,0)},line width=1pt]  plot[domain=3.141592653589793:4.71238898038469,variable=\t]({1*1*cos(\t r)+0*1*sin(\t r)},{0*1*cos(\t r)+1*1*sin(\t r)});
\draw [shift={(3,0)},line width=1pt]  plot[domain=-1.5707963267948966:0,variable=\t]({1*1*cos(\t r)+0*1*sin(\t r)},{0*1*cos(\t r)+1*1*sin(\t r)});
\draw [line width=1pt] (-2,2) circle (1cm);
\draw [shift={(3,4)},line width=1pt]  plot[domain=0:1.5707963267948966,variable=\t]({1*1*cos(\t r)+0*1*sin(\t r)},{0*1*cos(\t r)+1*1*sin(\t r)});
\draw [line width=0.5pt,dash pattern=on 1pt off 1pt] (-2,2)-- (-3,2);
\draw (-2.8240509621781595,2.582458007885445) node[anchor=north west] {$\frac{w_m}2$};
\draw (-2.7550343103973985,5.8050452064458355) node[anchor=north west] {$\xi_m=M_m$};
\draw (0.068418167476441,5.934621601537185) node[anchor=north west] {$g_m=-\frac{\delta_m}2$};
\draw (2.082531391492065,5.848706322518915) node[anchor=north west] {$g_m=0$};
\draw (-3.9353369666069447,0.6669359543701648) node[anchor=north west] {$K$};
\draw (-4.597318852748257,-0.17814730453363528) node[anchor=north west] {$U$};
\draw (-3.7240661518809937,3.835998175259415) node[anchor=north west] {$\xi_m>M_m$};
\draw [line width=1pt] (0.9379764930716581,5.385317483249716)-- (0.9802306560168466,-2.2063471259027563);
\draw [line width=1pt] (2.8478555443725536,5.399402204231446)-- (2.8476024986336014,-1.952822148231619);
\draw [line width=1pt] (-2,5.3712327622679865)-- (-2,-2);
\draw (-1.935306587201276,2.340203844940255) node[anchor=north west] {$z$};
\draw [line width=1pt] (-2.4846107054887465,1.6951205860364544)-- (-1.5550191206945627,1.6951205860364544);
\draw (-2.725457915306049,1.67967795853755) node[anchor=north west] {$u$};
\draw (-1.7536121675666748,1.67967795853755) node[anchor=north west] {$v$};
\draw (-2.3592551697810674,1.674137237817215) node[anchor=north west] {$y$};
\draw (-0.8930372345532516,2.761338521264265) node[anchor=north west] {$W_m$};
\draw [line width=1pt] (0,5.371232762267986)-- (0,-2.093669358048915);
\draw (-1.245155259096503,3.510642639551735) node[anchor=north west] {$w_m$};
\draw [line width=0.5pt,dash pattern=on 1pt off 1pt] (-2,3.469795429734435)-- (0,3.469795429734435);
\begin{scriptsize}
\draw [fill=xdxdff] (-2,2) circle (1pt);
\draw [fill=ududff] (0.9802306560168466,-2.2063471259027563) circle (1pt);
\draw[color=ududff] (1.092908423870687,-1.9035256247955605) node {$W$};
\draw [fill=ududff] (2.276024986336014,-1.952822148231619) circle (1pt);
\draw[color=ududff] (2.4309569171350427,-1.6077464841792304) node {$A_1$};
\draw [fill=xdxdff] (-2,-2) circle (1pt);
\draw[color=xdxdff] (-1.8507982613108958,-1.6500006471244204) node {$C_1$};
\draw [fill=ududff] (-2.4846107054887465,1.6951205860364544) circle (1pt);
\draw [fill=ududff] (-1.5550191206945627,1.6951205860364544) circle (1pt);
\draw [fill=uuuuuu] (-2,1.695120586036454) circle (1pt);
\draw [fill=uuuuuu] (-2,-1) circle (1pt);
\draw [fill=xdxdff] (-3,2) circle (1pt);
\draw [fill=xdxdff] (0,-2.093669358048915) circle (1pt);
\draw[color=xdxdff] (0.14923211809477294,-1.7485936939965305) node {$V_1$};
\end{scriptsize}
\end{tikzpicture}
\end{center}
\caption{Illustration for the proof of the Lipschitz continuity of $\varphi_m$ on $K$ (rectangle).
Here $K=S_{m-1}\cap B(0,\sqrt{n}+m-1)$ and $U=K+\frac{w_m}2B^\circ(0,1)$ with the notation of Lemma \ref{lem:local.Lipschitz}.}
\label{fig:Lipschitz}
\end{figure}
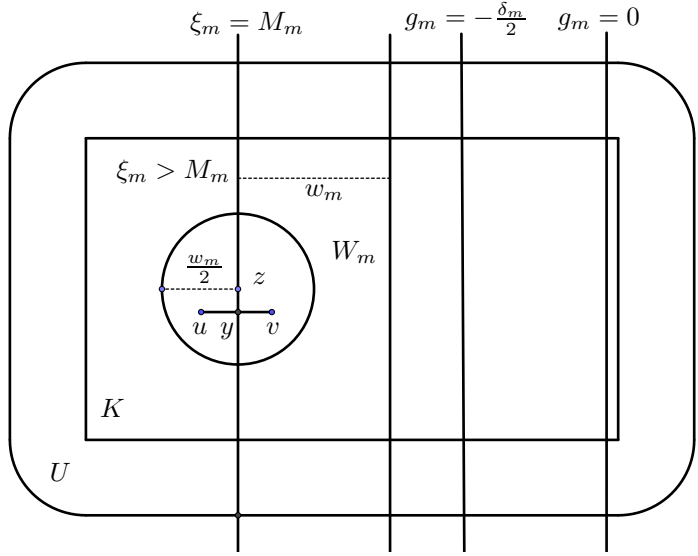


From the proof of Lemma \ref{lem:local.Lipschitz}, the Lipschitz constant of $ \varphi_m$ on $K$ is given by
\begin{equation}
     L_{\bar \varphi_m}:=  \max\left\{\frac{4{C_{\varphi_m}}}{w_m}, L_{\sqrt{\xi_m}}\right\} \,,
\end{equation}
Here we have covered $K$ by a finite sequence of balls with radii $\frac{w_m}{2}$ and centers lying on $K$.
\paragraph{The function $\varphi_m$ has a Lipschitz continuous extension $\bar \varphi_m$.}
Let $\bar \varphi_m:\R^n\to \R$ be the function defined by
\begin{equation}\label{eq:def.zeta}
    \bar \varphi_m(x):=\inf_y\{ \varphi_m(y) + L_{\bar \varphi_m}\|x-y\|_2\,:\,y\in S_{m-1}\cap  B(0,\sqrt{n}+m-1)\}\,.
\end{equation}
By Kirszbraun's theorem (stated in Lemma \ref{lem:Kirszbraun}), $\bar \varphi_m$ is Lipschitz continuous with Lipschitz constant $L_{\bar \varphi_m}$ and $\bar \varphi_m=\varphi_m$ on $S_{m-1}\cap  B(0,\sqrt{n}+m-1)$.

\paragraph{The function $\bar \varphi_m$ is even, positive and has a finite upper bound on $ B(0,\sqrt{n}+m)$ depending on $\varepsilon$.}
Let us prove that $\bar \varphi_m$ is even. 
Consider
\begin{equation}\label{eq:zeta.x}
    \bar \varphi_m(-x)=\inf_y\{ \varphi_m(y) + L_{\bar \varphi_m}\|-x-y\|_2\,:\,y\in S_{m-1}\cap  B(0,\sqrt{n}+m-1) \}\,.
\end{equation}
Let $y$ be any feasible solution of \eqref{eq:zeta.x}.
Since $g_1,\dots,g_{m-1}$ are even, $S_{m-1}\cap  B(0,\sqrt{n}+m)$ is symmetric, i.e., $S_{m-1}\cap  B(0,\sqrt{n}+m)=-S_{m-1}\cap  B(0,\sqrt{n}+m)$, it turns out that $-y$ is a feasible solution of \eqref{eq:zeta.x}.
Thus, 
\begin{equation}
\begin{array}{rl}
    \bar \varphi_m(-x)
    =&\inf_{-y}\{\varphi_m(-y) + L_{\bar \varphi_m}\|-x+y\|_2\,:\,-y\in S_{m-1}\cap  B(0,\sqrt{n}+m-1)\}   \\
      =& \inf_y\{\varphi_m(y) + L_{\bar\varphi_m}\|y-x\|_2\,:\,y\in S_{m-1}\cap  B(0,\sqrt{n}+m-1) \}
     =\bar \varphi_m(x) \,,
\end{array}
\end{equation}
where the latter inequality is due to the fact that $\varphi_m$ is even (since $\xi_m, g_m$ are even).
From this, $\bar \varphi_m$ is even.
It is not hard to show that $\bar \varphi_m\ge \sqrt{M_m}$ since $ \varphi_m\ge \sqrt{M_m}$. 

Let us estimate the upper bound of $\bar \varphi_m$ on $ B(0,\sqrt{n}+m)$. 
Let $x\in  B(0,\sqrt{n}+m)$ and $y\in S_{m-1}\cap  B(0,\sqrt{n}+m-1)$.
From \eqref{eq:def.zeta}, we get
\begin{equation}
    \bar \varphi_m(x) \le \varphi_m(y) + L_{\bar\varphi_m}\|x-y\|_2   \le C_{\varphi_m} + {(2 (\sqrt{n}+m) - 1)} L_{\bar \varphi_m}=:C_{\bar \varphi_m}\,.
\end{equation}
Thus, 
\begin{equation}
\begin{array}{l}
\sup_{x\in B(0,\sqrt{n}+m)}\bar \varphi_m(x)  \le C_{\bar \varphi_m}\,.
\end{array}
\end{equation}
Set $f_{m-1}:=f+\frac{\varepsilon}{2}-\bar\varphi_m^2g_m$.

From \eqref{eq:firstterm} and since $\bar\varphi_m=\varphi_m=\sqrt{\psi_m}$ on $ S_{m-1}\cap B(0,\sqrt{n}+m-1)$, $f_{m-1}\ge 0$ on $S_{m-1}\cap  B(0,\sqrt{n}+m-1)$.
Since $\bar\varphi_m$ is Lipschitz continuous, $f_{m-1}$ is Lipschitz continuous on $ B(0,\sqrt{n}+m)$.

\paragraph{A bound and a Lipschitz constant  of $f_{m-1}$ on $ B(0,\sqrt{n}+m)$ both depend on $\varepsilon$.}
Let us compute an upper bound of $|f_{m-1}|$ on $ B(0,\sqrt{n}+m)$.
Let $y\in B(0,\sqrt{n}+m)$. Then
\begin{equation}
\begin{array}{rl}
     |f_{m-1}(y)| \le |f(y)| +\frac{\varepsilon}{2} + \bar\varphi_m(y)^2|g_m(y)|  \le C_f +  \frac{\varepsilon}{2} + C_{g_m} C_{\bar\varphi_m}^2=:C_{f_{m-1}}\,.
\end{array}
\end{equation}
Thus, 
\begin{equation}\label{eq:bound.f.sphere}
     \|f_{m-1}\|_{ B(0,\sqrt{n}+m)}  \le C_{f_{m-1}}\,.
\end{equation}

We now estimate the Lipschitz constant  of $f_{m-1}$ on $ B(0,\sqrt{n}+m)$.
Let $y,z\in  B(0,\sqrt{n}+m)$ such that $y\ne z$. Then
\begin{equation}
    \begin{array}{rl}
       &\frac{|f_{m-1}(y)-f_{m-1}(z)|}{\|y-z\|_2}\\
       \le & \frac{|f(y)-f(z)|+|\bar\varphi_m(y)^2g_m(y)-\bar\varphi_m(z)^2g_m(z)|}{\|y-z\|_2} \\
        \le &  L_f+\frac{ |\bar\varphi_m(y)^2g_m(y)-\bar\varphi_m(z)^2g_m(y)|}{\|y-z\|_2}\\
        &+\frac{|\bar\varphi_m(z)^2g_m(y)-\bar\varphi_m(z)^2g_m(z)|}{\|y-z\|_2}\\
         =&L_f+\frac{
         |g_m(y)||\bar\varphi_m(y)+\bar\varphi_m(z)||\bar\varphi_m(y)-\bar\varphi_m(z)|+\bar\varphi_m(z)^2|g_m(y)-g_m(z)|}{\|y-z\|_2}\\
         \le&L_f+\frac{ 2C_{g_m}C_{\bar\varphi_m} L_{\bar\varphi_m}\|y-z\|_2+C_{\bar\varphi_m}^2 L_{g_m}\|y-z\|_2}{\|y-z\|_2}\\
         =& L_f+2C_{g_m}L_{\bar\varphi_m}C_{\bar\varphi_m} +L_{g_m}C_{\bar\varphi_m}^2=:L_{f_{m-1}} \,.
    \end{array}
\end{equation}
Then, $L_{f_{m-1}}$ is a Lipschitz constant of $f_{m-1}$ on $ B(0,\sqrt{n}+m)$. 

Notice that $C_{\bar\varphi_{m}}, L_{\bar\varphi_{m}}, C_{f_{m-1}}, L_{f_{m-1}}$ are obtained by composing finitely many times the following operators: ``$+$", ``$-$", ``$\times$", ``$\div$", ``$|\cdot|$",``$(x_1,x_2)\mapsto\max\{x_1,x_2\}$", ``$(x_1,x_2)\mapsto\min\{x_1,x_2\}$", ``$(\cdot)^{\alpha_m}$" and ``$\sqrt{\cdot}$", where all arguments possibly depend on $\varepsilon$.
Without loss of generality we can assume $C_{\bar\varphi_{m}}=\bar r_m\varepsilon^{-r_m}$, $L_{\bar\varphi_{m}}=\bar t_m\varepsilon^{-t_m}$, $C_{f_{m-1}}=\bar c_{m-1}\varepsilon^{-c_{m-1}}$, $L_{f_{m-1}}=\bar l_{m-1}\varepsilon^{-l_{m-1}}$ for some $\bar r_m$, $r_m$, $\bar t_{m}$, $t_m$, $\bar c_{m-1}$, $c_{m-1}$, $\bar l_{m-1}$, $l_{m-1}$ large enough and independent of $\varepsilon$.

\paragraph{Backward induction.}
Repeating the above process (after replacing $f_j$ by $f_{j-1}$) several times, we obtain functions  $\bar\varphi_{j}:\R^n\to\R$, $j=m,m-1,\dots,1$, such that,
\begin{enumerate}
    \item $\bar\varphi_{j}$ is positive, even and bounded from above by  $C_{\bar\varphi_{j}}=\bar r_j\varepsilon^{-r_j}$  on $ B(0,\sqrt{n}+j)$ for some positive constants $\bar r_j$ and $r_j$ independent of $\varepsilon$.
    \item $\bar\varphi_{j}$ is Lipschitz with Lipschitz constant $L_{\bar\varphi_{j}}=\bar t_j\varepsilon^{-t_j}$ for some positive constants $\bar t_j$ and $t_j$ independent of $\varepsilon$.
    \item 
       $f_{j-1}:=f_{j}+\frac{\varepsilon}{2^{m-j+1}}-\bar\varphi_{j}^2g_{j}$ satisfies:
       \begin{enumerate}
           \item $f_{j-1}\ge 0$ on $S_{j-1}\cap  B(0,\sqrt{n}+j-1)$;
           \item  $f_{j-1}\le C_{f_{j-1}}$ on $ B(0,\sqrt{n}+{j})$, where $C_{f_{j-1}}=\bar c_{j-1}\varepsilon^{-c_{j-1}}$  for some positive constants $\bar c_{j-1}$ and $c_{j-1}$ independent of $\varepsilon$;
           \item $f_{j-1}$ is Lipschitz  on $ B(0,\sqrt{n}+{j})$ with Lipschitz constant $L_{f_{j-1}}=\bar l_{j-1}\varepsilon^{-l_{j-1}}$ for some positive constants $\bar l_{j-1}$ and $l_{j-1}$ independent of $\varepsilon$.
       \end{enumerate}
\end{enumerate}
Then 
\begin{equation}
    \begin{array}{rl}
         f_0&=f_1 +\frac{\varepsilon}{2^{m}}-\bar\varphi_{1}^2g_{1} \\
         & = \left(f_2+\frac{\varepsilon}{2^{m-1}}-\bar\varphi_{2}^2g_{2}\right) +\frac{\varepsilon}{2^{m}}-\bar\varphi_{1}^2g_{1}\\
         & = f_2+\left(\frac{\varepsilon}{2^{m-1}}+\frac{\varepsilon}{2^{m}}\right) -\bar\varphi_{2}^2g_{2}-\bar\varphi_{1}^2g_{1}\\
         &=\dots=f_m+\varepsilon\sum_{i=1}^m \frac{1}{2^i} - \sum_{i=1}^m \bar\varphi_{i}^2g_{i}\\
         &=f+\frac{\varepsilon}{2}\frac{1-\frac{1}{2^{m}}}{1-\frac{1}{2}} - \sum_{i=1}^m \bar\varphi_{i}^2g_{i}\\
         &=f+\varepsilon(1-\frac{1}{2^{m}}) - \sum_{i=1}^m \bar\varphi_{i}^2g_{i}\,.
    \end{array}
\end{equation}
From this and since $f_0\ge 0$ on $S_0\cap B(0,\sqrt{n})=B(0,\sqrt{n})\supset [-1,1]^n$, we obtain
\begin{equation}\label{eq:nonega}
    f+\varepsilon - \sum_{i=1}^m \bar\varphi_{i}^2g_{i}\ge \frac{\varepsilon}{2^{m}}\text{ on }[-1,1]^n\,.
\end{equation}
\subsubsection{Polynomial approximations for the weight functions}
\paragraph{Approximating with Bernstein polynomials.}
For each $i\in[m]$, we now approximate $\bar \varphi_i$ on $[-1,1]^n$ with the following Bernstein polynomials: 
\begin{equation}
    B_i^{(d)}(x)=B_{y\mapsto \bar \varphi_i(2y-e),de}\left(\frac{x+e}{2}\right)\,,\quad d\in\N\,,
\end{equation}
with $e=(1,\dots,1)\in\R^n$.
By using Lemma \ref{lem:app.poly}, for all $x\in [-1,1]^n$, for $i\in[m]$,
\begin{equation}
    |B_i^{(d)}(x) - \bar \varphi_i(x) | \le
{L_{\bar \varphi_i}} \biggl(\frac{n}{d} \biggr)^{\frac{1}2}\,,\quad d\in\N\,,
\end{equation}
 and the following inequality holds for all $x\in [-1,1]^n$, for $i\in[m]$:
\begin{equation}
\begin{array}{l}
    |B_i^{(d)}(x)|\le  \sup_{x\in[-1,1]^n}|\bar\varphi_i(x)|\le C_{\bar\varphi_i}\,.
\end{array}
\end{equation}


For $i\in[m]$, let 
\begin{equation}
    d_i:=2u_i\quad\text{with}\quad u_i=\Bigl\lceil{  \frac{2C_{g_i}^2C_{\bar \varphi_i}^2n L_{\bar \varphi_i}^2 (m+1)^22^{2m}}{\varepsilon^2}} \Bigr\rceil\,,
\end{equation}
where $C_{g_i}:=\|g_i\|_{B(0,\sqrt{n}+i)}$, for $i\in[m]$.
Then for all $x\in[-1,1]^n$, 
\begin{equation}
\begin{array}{rl}
    |B_i^{(d_i)}(x) - \bar \varphi_i(x) |  & \le
{L_{\bar \varphi_i}} \left(\frac{n}{d_i} \right)^{\frac{1}2}   \\
&\le
{L_{\bar \varphi_i}} \left(\frac{n}{ \frac{4C_{g_i}^2C_{\bar \varphi_i}^2n L_{\bar \varphi_i}^2 (m+1)^22^{2m}}{\varepsilon^2}} \right)^{\frac{1}2}\\
& = \frac{\varepsilon}{2C_{g_i}C_{\bar \varphi_i}(m+1)2^m}\,.
\end{array}
\end{equation}
\paragraph{Converting to homogeneous approximations.}
For $i\in[m]$, we write 
$B_i^{(d_i)}=\sum_{j=0}^{nd_i}{h_i^{(j)}}$ such that $h_i^{(j)}$ is a homogeneous polynomial with $\deg(h_i^{(j)})=j$.
Set $p_i:=\frac{1}{2}[B_i^{(d_i)}(x)+B_i^{(d_i)}(-x)]$, for $i\in[m]$.
Then $p_i=\sum_{t=0}^{nu_i}{h_i^{(2t)}}$, for $i\in[m]$, since $h_i^{(j)}(x)=h_i^{(j)}(-x)$ if $j$ is even and $h_i^{(j)}(x)=-h_i^{(j)}(-x)$ otherwise.
Since $\bar \varphi_i$ is even, $\bar \varphi_i(x)=\frac{1}{2}[\bar \varphi_i(x)+ \bar \varphi_i(-x)]$.
It implies that for $x\in[-1,1]^n$, for $i\in[m]$,
\begin{equation}
\begin{array}{rl}
    |p_i(x) - \bar \varphi_i(x)| & 
    =| \frac{1}{2}[B_i^{(d_i)}(x)+B_i^{(d_i)}(-x)] - \frac{1}{2}[\bar \varphi_i(x)+\bar \varphi_i(-x)]| \\
     & \le \frac{1}{2}|B_i^{(d_i)}(x) - \bar \varphi_i(x) |+\frac{1}{2}|B_i^{(d_i)}(-x) - \bar \varphi_i(-x) |\\ 
     &\le
\frac{\varepsilon}{4C_{g_i}C_{\bar \phi_i}(m+1)2^m}+\frac{\varepsilon}{4C_{g_i}C_{\bar \varphi_i}(m+1)2^m}=\frac{\varepsilon}{2C_{g_i}C_{\bar \varphi_i}(m+1)2^m}\,.
\end{array}
\end{equation}
and 
\begin{equation}
    |p_i(x)|\le  \frac{1}{2}(|B_i^{(d_i)}(x)|+|B_i^{(d_i)}(x)|) \le \frac{1}{2}(C_{\bar \varphi_i}+C_{\bar \varphi_i})=C_{\bar \varphi_i}\,.
\end{equation}
Set $q_i:=\sum_{t=0}^{nu_i}{h_i^{(2t)}}\|x\|_2^{2(nu_i-t)}$.
Then $q_i$ is a homogeneous polynomial of degree $2nu_i$ and $q_i=p_i$ on $\mathbb{S}^{n-1}$, for $i\in[m]$. 
Thus for $i\in[m]$, $|q_i(x) - \bar \varphi_i(x)|\le \frac{\varepsilon}{2C_{g_i}C_{\bar \varphi_i}(m+1)2^m}$ and $|q_i(x)|\le C_{\bar \varphi_i}$, for all $x\in \mathbb{S}^{n-1}$.
From these and \eqref{eq:nonega}, for all $x\in \mathbb{S}^{n-1}$, \begin{equation}
    \begin{array}{rl}
     & f(x)+\varepsilon - \sum_{i=1}^m q_{i}(x)^2g_{i}(x)\\
       =&f(x)+\varepsilon - \sum_{i=1}^m \bar\varphi_{i}(x)^2g_{i}(x) + \sum_{i=1}^m g_{i}(x)[\bar\varphi_{i}(x)^2-q_{i}(x)^2]\\
          \ge& \frac{\varepsilon}{2^{m}} - \sum_{i=1}^m |g_{i}(x)||\bar\varphi_{i}(x)+q_{i}(x)||\bar\varphi_{i}(x)-q_{i}(x)|\\
         \ge& \frac{\varepsilon}{2^{m}} - \sum_{i=1}^m C_{g_{i}(x)}(|\bar\varphi_{i}(x)|+|q_{i}(x)|)\frac{\varepsilon}{2C_{g_i}C_{\bar \varphi_i}(m+1)2^m}\\
         \ge &\frac{\varepsilon}{2^{m}} - \sum_{i=1}^m 2C_{g_{i}}C_{\bar\varphi_{i}}\frac{\varepsilon}{2C_{g_i}C_{\bar \varphi_i}(m+1)2^m}\\
         =& \frac{\varepsilon}{2^{m}} - \frac{m\varepsilon}{(m+1)2^m}=\frac{\varepsilon}{(m+1)2^m}\,.\\
    \end{array}
\end{equation}
Moreover, for all $x\in \mathbb{S}^{n-1}$, \begin{equation}
    \begin{array}{l}
     f(x)+\varepsilon - \sum_{i=1}^m q_{i}(x)^2g_{i}(x)
       \le C_f+\varepsilon + \sum_{i=1}^m C_{\bar \varphi_i}^2C_{g_{i}}=:C_F\,.
    \end{array}
\end{equation}

\subsubsection{Applying the global positivity certificate}
Set $D:=\max_{i\in[m]}\{2nu_i+d_{g_i},\df\}$ and 
\begin{equation}
\begin{array}{l}
F=\|x\|_2^{2(D-\df)}(f+\varepsilon\|x\|_2^{2\df}) - \sum_{i=1}^m g_{i} q_{i}^2\|x\|_2^{2(D-2nu_i-d_{g_i})}\,.
\end{array}
\end{equation}
Then $F$ is a homogeneous polynomial of degree $2D$ and for all $x\in\mathbb{S}^{n-1}$,
\begin{equation}
\begin{array}{l}
C_F\ge F(x)=f(x)+\varepsilon - \sum_{i=1}^m q_{i}(x)^2g_{i}(x)\ge \frac{\varepsilon}{(m+1)2^m}\,. 
\end{array}
\end{equation}
It implies that $F$ is a positive definite form of degree $2D$ with $\inf_{x\in\mathbb{S}^{n-1}}F(x)\ge \frac{\varepsilon}{(m+1)2^m}$ and $\sup_{x\in\mathbb{S}^{n-1}}F(x)\le C_F$.
There is no loss of generality in assuming $C_F=b\varepsilon^{-b}$ for some large enough $b>0$  independent of $\varepsilon$.
Similarly assume that $D\ge d\varepsilon^{-d}$ for some large enough $d>0$  independent of $\varepsilon$.
From this, 
\begin{equation}
    \Theta(F)\le \frac{b\varepsilon^{-b}}{\frac{\varepsilon}{(m+1)2^m}}=b(m+1)2^m\varepsilon^{-b-1}\,.
\end{equation}
Set 
\begin{equation}
    \bar K:= \frac{{2nd\varepsilon^{-d}(2d\varepsilon^{-d} - 1)}}{{4\log 2}}b(m+1)2^m\varepsilon^{-b-1} \,.
\end{equation}
Then
\begin{equation}
    \bar K\ge \frac{{2nD(2D - 1)}}{{4\log 2}}\Theta (F) - \frac{{n + 2D}}{2}\,.
\end{equation}
Clearly there exist positive constants $\bar c$ and $c$  independent of $\varepsilon$ such that $ \bar c\varepsilon^{-c}\ge \bar K$.
Let $K\in\N$ and $K\ge \bar c\varepsilon^{-c}\ge \bar K$.
According to Lemma \ref{lem:homogeneous.sos}, there exists a homogeneous SOS polynomial $s_0$ of degree $2(D+K)$ such that $\|x\|_2^{2K}F=s_0$.
It implies that
\begin{equation}
\begin{array}{rl}
    \|x\|_2^{2(D-\df+K)}(f+\varepsilon\|x\|_2^{2\df})  &  =s_0+ \sum_{i=1}^m g_{i} q_{i}^2\|x\|_2^{2(D-2nu_i-d_{g_i}+K)}\\
     & =s_0+ \sum_{i=1}^m g_{i} s_i\,,
\end{array}
\end{equation}
where $s_i:= q_{i}^2\|x\|_2^{2(D-2nu_i-d_{g_i}+K)}$ is a homogeneous SOS polynomial such that $\deg(g_is_i)=2(K+D)$, for $i\in[m]$.
Set $k=D-\df+K$. 
Then $\|x\|_2^{2k}(f+\varepsilon\|x\|_2^{2\df})=s_0+ \sum_{i=1}^m g_{i} s_i$ with $\deg(s_0)=\deg(g_is_i)=2(k+\df)$, for $i\in[m]$.
\paragraph{The case of the ice cream constraint.}
Assume that $m=1$ and $g_1=x_n^2-\|x'\|_2^2$ with  $x':=(x_1,\dots,x_{n-1})$. 
We shall show that $c=65$.
Using Lemma \ref{lem:lojas.ineq.ball}, we can take $\alpha_m=2$ in \eqref{eq:lojasiewicz.ineq}.
We then obtain the following asymptotic equivalences as $\varepsilon\to 0^+$:
\begin{equation}
    \begin{array}{rl}
       &\delta_m \sim R_1\varepsilon^2 \Rightarrow C_{\psi_m} \sim R_2\varepsilon^{-2} \Rightarrow C_{\varphi_m} \sim R_3\varepsilon^{-1}\Rightarrow w_m \sim R_4\varepsilon^5 \Rightarrow L_{\sqrt{\xi_m}}\sim R_5\varepsilon^{-\frac{9}{2}}\\
       &\Rightarrow L_{\bar \varphi_m} \sim R_6\varepsilon^{-6} \Rightarrow C_{\bar \varphi_m} \sim R_7\varepsilon^{-6} 
       \Rightarrow u_m \sim R_{8}\varepsilon^{-26} \Rightarrow d_m \sim R_{9}\varepsilon^{-26} \\
       &\Rightarrow C_F\sim R_{10}\varepsilon^{-12}\Rightarrow D\sim R_{11}\varepsilon^{-26} 
       \Rightarrow b=12\Rightarrow d=26 \Rightarrow \bar K\sim R_{12}\varepsilon^{-65}\\
       & \Rightarrow c = 65.
    \end{array}
\end{equation}
for some $R_j>0$ independent of $\varepsilon$, $j\in[12]$.
This completes the proof of Theorem \ref{theo:complex.putinar.vasilescu}.

\bibliography{reference} 

\end{document}